\documentclass[a4paper]{amsart}

\usepackage{a4wide}
\usepackage[colorlinks, final]{hyperref}
\usepackage[utf8]{inputenc}

\usepackage{amsmath, amscd, amsfonts, amsthm, amssymb, bbm, mathtools}
\usepackage{aliascnt}
\usepackage{comment}
\usepackage{xkvltxp, fixme}

\fxsetup{targetlayout=color}

\makeatletter

\newcommand{\@pmath}{$p$}

\title{The probability that a \texorpdfstring{\@pmath}{p}-adic random \'etale algebra is an unramified field}
\author{Roy Shmueli}

\address{Raymond and Beverly Sackler School of Mathematical Sciences, Tel Aviv University, Tel Aviv 69978, Israel}
\email{royshmueli@mail.tau.ac.il}

\newcommand{\autotheorem}[3]{
\newaliascnt{#1counter}{#2}
\newtheorem{#1}[#1counter]{#3}
\expandafter\newcommand\csname #1counterautorefname\endcsname{#3}
}

\theoremstyle{plain}
\newtheorem{theorem}{Theorem}

\autotheorem{thm}{theorem}{Theorem}
\autotheorem{lem}{theorem}{Lemma}
\autotheorem{prop}{theorem}{Proposition}
\autotheorem{cor}{theorem}{Corollary}
\autotheorem{fact}{theorem}{Fact}
\autotheorem{conj}{theorem}{Conjecture}

\theoremstyle{definition}
\autotheorem{defn}{theorem}{Definition}
\autotheorem{exm}{theorem}{Example}

\theoremstyle{remark}
\autotheorem{rem}{theorem}{Remark}

\newtheoremstyle{case}%
  {\topsep}%
  {\topsep}%
  {}%
  {\parindent}%
  {\itshape}%
  {}%
  { }%
  {\thmname{#1}\thmnumber{ #2}:{\thmnote{ #3.}}}

\theoremstyle{case}
\newtheorem{case}{Case}
\counterwithin*{case}{theorem}

\hypersetup{
  pdftitle={\@title},
  pdfauthor={\@author}
}



\newcommand*{\NN}{{\mathbb{N}}}
\newcommand*{\ZZ}{{\mathbb{Z}}}
\newcommand*{\QQ}{{\mathbb{Q}}}
\newcommand*{\RR}{{\mathbb{R}}}
\newcommand*{\CC}{{\mathbb{C}}}
\newcommand*{\FF}{{\mathbb{F}}}

\renewcommand*{\Pr}{{\mathbb{P}}}
\newcommand*{\Ex}{{\mathbb{E}}}
\newcommand*{\cond}{\;\middle|\;}

\newcommand*{\diff}{\,{\!\mathop{}\mathrm{d}}}

\DeclarePairedDelimiter{\pa}{\lparen}{\rparen}
\DeclarePairedDelimiter{\br}{\lbrack}{\rbrack}
\DeclarePairedDelimiter{\tb}{<}{>}
\DeclarePairedDelimiter{\set}{\{}{\}}
\DeclarePairedDelimiter{\abs}{\lvert}{\rvert}

\newcommand*{\Bcl}{\mathcal{B}}
\newcommand*{\Ccl}{\mathcal{C}}

\newcommand*{\Mcl}{\mathcal{M}}
\newcommand*{\Ocl}{\mathcal{O}}

\newcommand*{\Scl}{\mathcal{S}}


\newcommand*{\rv}{\xi}
\newcommand*{\plift}{\mathfrak{p}}

\newcommand*{\@newrootsAll}[1][K]{R_{#1}}
\newcommand*{\@newrootsSet}[2][K]{{R_{#1}\pa*{#2}}}
\newcommand*{\newroots}{\@ifstar{\@newrootsAll}{\@newrootsSet}}

\newcommand*{\@dfuncmultivar}{J}
\newcommand*{\@dfuncstar}{J^\ast}
\newcommand*{\dfunc}{\@ifstar{\@dfuncstar}{\@dfuncmultivar}}

\newcommand*{\iae}{\varepsilon}

\newcommand*{\pav}[2][p]{\abs*{#2}_{#1}}
\newcommand*{\maxideal}[1][K]{\mathfrak{m}_{#1}}
\newcommand*{\xdisc}[2][K/F]{\Delta_{#1}\pa*{#2}}
\newcommand*{\fdisc}[1][K]{D_{#1}}
\newcommand*{\anorm}[2][K/\QQ_p]{N_{#1}\pa*{#2}}
\newcommand*{\gen}{\Theta}

\newcommand*{\inidicator}{\mathbbm{1}}

\DeclareMathOperator{\id}{id}

\DeclareMathOperator{\Aut}{Aut}
\DeclareMathOperator{\Gal}{Gal}
\DeclareMathOperator{\Hom}{Hom}
\DeclareMathOperator{\Span}{Span}
\DeclareMathOperator{\Inv}{Inv}

\makeatother

\begin{document}

\begin{abstract}
We study the random \'etale algebra generated by a random polynomial with i.i.d.\ coefficients distributed according to Haar measure normalized on $\ZZ_p$.
We determine the probability that this random algebra is an unramified field, explicitly.
In addition, we prove a private case of a conjecture made by Bhargava, Cremona, Fisher and Gajovi\'c.
More precisely, we show that this probability is rational function of $p$ that is invariant under replacing $p$ by $1/p$.

\end{abstract}

\maketitle

\listoffixmes

\section{Introduction}
\label{sec:introduction}
For any positive integer $n$, let $f_n$ be the random polynomial
\begin{equation*}
  f_n\pa*{X} = \rv_0 + \rv_1 X + \dots + \rv_n X^n,
\end{equation*}
where $\rv_0,\dots,\rv_n$ are independent and identically distributed random variables taking values in $\ZZ_p$ and distributed according to the normalized Haar measure on $\ZZ_p$.
In this paper, we study the random algebra $A_n := \QQ_p\br*{X} / \tb*{f_n}$.
This algebra is \'etale, almost surely.
Therefore, it induces a splitting type.

A splitting type of degree $n$ is a tuple $\sigma = \pa*{d_1^{e_1} \dots d_k^{e_k}}$, where $d_i$ and $e_i$ are positive integers satisfying $\sum_{i=1}^k d_i e_i = n$.
We allow repeats in the list of symbols $d_i^{e_i}$, but the order in which they appear does not matter.

For an \'etale algebra extension $A/\QQ_p$ of degree $n$, we define its splitting type to be $\sigma\pa*{A} = \pa*{d_1^{e_1} \dots d_k^{e_k}}$ if $p$ factors in $A$ as $\plift_1^{e_1} \cdots \plift_k^{e_k}$ where $\plift_1, \dots, \plift_k$ are primes in $A$ having residue field degrees $d_1, \dots, d_k$ respectively.

In \cite{bhargava2022density}, Bhargava, Cremona, Fisher and Gajovi\'c study the splitting type of $A_n$ via the number of roots of $f_n$ in $\QQ_p$.
They calculated the probability that $f_n$ has exactly $r$ roots.
This event is equivalent to the event that $1^1$ appear exactly $r$ times in $\sigma\pa*{A_n}$.
In their research they showed that this probability is a rational function in $p$ which is invariant under replacing $p$ by $p^{-1}$.
Moreover, they conjectured a more general property on the probabilities of the splitting type:

For a splitting type $\sigma$ of degree $n$, let $E_\sigma$ be the event that $A_n$ is \'etale over $\QQ_p$ and $\sigma\pa*{A_n} = \sigma$.
We define the following probabilities:
\begin{align*}
\rho\pa*{\sigma; p} &= \Pr\pa*{E_\sigma}, \\
\alpha\pa*{\sigma; p} &= \Pr\pa*{E_\sigma \cond f_n \text{ monic}}, \qquad \text{and} \\
\beta\pa*{\sigma; p} &= \Pr\pa*{E_\sigma \cond f_n \text{ monic and } f_n \equiv X^n \pmod{p}}.
\end{align*}
\begin{conj}
\label{main-conjecture}
Let $\sigma$ be any splitting type.
Then $\rho\pa*{\sigma; p}$, $\alpha\pa*{\sigma; p}$ and $\beta\pa*{\sigma; p}$ are rational functions of $p$ and satisfy the identities:
\begin{align}
\label{eq:rho-inversion}
\rho\pa*{\sigma; p} &= \rho\pa*{\sigma; p^{-1}}, \qquad \text{and} \\
\label{eq:alphabeta-inversion}
\alpha\pa*{\sigma; p} &= \beta\pa*{\sigma; p^{-1}}.
\end{align}
\end{conj}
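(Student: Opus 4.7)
The first task is to show that $\rho\pa*{\sigma;p}$, $\alpha\pa*{\sigma;p}$, and $\beta\pa*{\sigma;p}$ are rational functions of $p$. Writing each as a Haar-measure integral over $\ZZ_p^{n+1}$, the event $E_\sigma$ is semi-algebraic in Macintyre's language of $p$-adic fields: whether $A_n$ has splitting type $\sigma$ can be detected by finitely many congruences on the coefficients of $f_n$ modulo a sufficiently large power of $p$, namely those that pin down the Newton polygon and the factorization of each slab after applying Hensel's lemma. Denef's rationality theorem then gives rationality of the unconditional measure, and rationality of $\alpha$ and $\beta$ follows by dividing by the manifestly rational probabilities of the conditioning events.

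\textbf{Functional equations.} For \eqref{eq:alphabeta-inversion}, the plan is to construct a measure-theoretic correspondence that implements $p \mapsto p^{-1}$. The basic tool is the reciprocal involution $f\pa*{X} \mapsto X^n f\pa*{1/X}$, which preserves the isomorphism class of the \'etale algebra (away from the locus $X=0$) and interchanges the roles of leading and constant coefficients. Stratifying monic polynomials by $v_p$ of their constant term and renormalizing Newton-polygon slab by slab, the image of this involution should match the conditioning in $\beta$ up to Jacobian factors; each rescaling contributes a power of $p$, and these powers should assemble into the desired substitution $p \mapsto p^{-1}$. Once \eqref{eq:alphabeta-inversion} is in hand, the self-duality \eqref{eq:rho-inversion} follows by decomposing $\rho$ according to $v_p\pa*{\rv_n}$: the stratum $v_p\pa*{\rv_n}=0$ reduces $f_n/\rv_n$ to the monic (i.e.\ $\alpha$) case, while the strata $v_p\pa*{\rv_n}>0$ split $f_n$ via the Newton polygon into factors that, after pulling out explicit powers of $p$, fall into the $\beta$-conditioning regime; the $\alpha$--$\beta$ duality then converts $p$ into $p^{-1}$ throughout.

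\textbf{Main obstacle.} The essential difficulty is uniform control of the Newton-polygon decomposition across all splitting types. For the unramified field case $\sigma = \pa*{n^1}$, the Newton polygon must be trivial and only the slope-zero segment is relevant, so the computation collapses to Hensel-lifting counts of irreducible polynomials of degree $n$ over $\FF_p$ and the reciprocal involution acts essentially on $\FF_p$-polynomials. For a general $\sigma$, non-trivial ramification forces multiple Newton segments to contribute, the reciprocal involution interacts nontrivially with every slope, and tracking how ramification indices and residue degrees are distributed across the slopes — while simultaneously matching Jacobians to realize the $p \leftrightarrow p^{-1}$ symmetry — appears genuinely delicate. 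I expect the full conjecture to require either a generating-function identity processing all strata at once, or new structural insight into how ramification data transforms under the reciprocal involution; this is likely why the paper restricts to the unramified-field stratum.
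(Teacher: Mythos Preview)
The statement you are trying to prove is labeled \emph{Conjecture} in the paper, and the paper does not prove it. The paper establishes only the special case $\sigma = (n^1)$ (Theorem~\ref{main-thm}), and even for that case its method bears no resemblance to your sketch: it computes $\alpha(\sigma;p)$ and $\beta(\sigma;p)$ explicitly as $J^\ast_n(p)$ and $J^\ast_n(p^{-1})$ via a root-counting integral (Proposition~\ref{hn-roots}) over the unramified degree-$n$ field, and the $p\leftrightarrow p^{-1}$ symmetry emerges from a purely combinatorial inversion identity $J_n(u^{-1},v^{-1})=v^{n-1}J_n(u,v)$ (Proposition~\ref{d-function-inverse}) proved with incidence-algebra machinery on the divisor poset of $n$. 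No model-theoretic rationality theorem and no direct measure-theoretic duality is invoked.

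Your proposal is honest about being a plan rather than a proof, and your ``Main obstacle'' paragraph correctly diagnoses why the general case is open. But even as a plan, the functional-equation step has a genuine gap. The reciprocal involution $f\mapsto X^n f(1/X)$ preserves the Haar law on coefficients \emph{and} the isomorphism class of $A_n$, so applying it yields only the tautology $\rho(\sigma;p)=\rho(\sigma;p)$; it does not by itself produce any $p\mapsto p^{-1}$. Your sentence ``each rescaling contributes a power of $p$, and these powers should assemble into the desired substitution'' is exactly the missing content: you have not exhibited any bijection or change of variables whose Jacobian factors convert a $\rho$-, $\alpha$-, or $\beta$-integral at $p$ into one at $p^{-1}$. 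In the one case the paper handles, that conversion is not achieved geometrically at all but through the algebraic identity for $J_n$, whose proof occupies all of Section~\ref{inc-alg}.

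On rationality: invoking Denef is reasonable, but be aware that the standard statements give rationality in $q^{-s}$ for a \emph{fixed} $p$, or uniformity only for $p\gg 0$ via motivic integration. Asserting a single element of $\QQ(t)$ that specializes correctly at \emph{every} prime is a stronger claim and would need justification beyond a one-line citation.
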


We establish the conjecture in the case that the algebra is an unramified field, that is when $\sigma = \pa*{n^1}$.
\begin{theorem}
\label{main-thm}
For any positive integer $n$ there exists a rational function $\dfunc*_n \in \QQ\pa*{t}$ such that
for $\sigma = \pa*{n^1}$ we have
\begin{align}
\label{eq:rho-formula}
\rho\pa*{\sigma; p} &= \frac{p-1}{p^{n+1} - 1} \pa*{p^n \dfunc*_n\pa*{p} + \dfunc*_n\pa*{p^{-1}}}, \\
\label{eq:alpha-formula}
\alpha\pa*{\sigma; p} &= \dfunc*_n\pa*{p}, \qquad \text{and}\\
\label{eq:beta-formula}
\beta\pa*{\sigma; p} &= \dfunc*_n\pa*{p^{-1}}.
\end{align}
\end{theorem}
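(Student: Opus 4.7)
The plan is to first reduce the computation of $\rho\pa*{\sigma;p}$ to a linear combination of $\alpha\pa*{\sigma;p}$ and $\beta\pa*{\sigma;p}$ by a Newton-polygon based stratification of the coefficient space $\ZZ_p^{n+1}$, and then to establish the rationality of $\alpha$ and the $p \leftrightarrow p^{-1}$ symmetry $\beta(p) = \alpha(p^{-1})$ separately. Since $A_n$ depends only on the ideal $\tb*{f_n}$, the event $E_\sigma$ is invariant under the diagonal scalings $\pa*{\rv_0,\ldots,\rv_n} \mapsto \lambda\pa*{\rv_0,\ldots,\rv_n}$ with $\lambda \in \QQ_p^\times$. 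Writing every nonzero coefficient tuple uniquely as $p^k \rv'$ with $k\geq 0$ and $\min_i v_p(\rv'_i) = 0$, and noting that multiplication by $p^k$ scales Haar measure by $p^{-k(n+1)}$, one obtains
\[
  \rho\pa*{\sigma;p} = \frac{p^{n+1}}{p^{n+1}-1}\,\Pr\pa*{E_\sigma \cap N}, \qquad N := \set*{\rv \in \ZZ_p^{n+1} : \min_i v_p(\rv_i) = 0}.
\]

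Next I would split $N = N_{\mathrm{top}} \sqcup N_{\mathrm{low}}$ by whether $\rv_n \in \ZZ_p^\times$ or $\rv_n \in p\ZZ_p$. On $N_{\mathrm{top}}$ the monic normalisation $\rv_n^{-1}f_n$ has i.i.d.\ Haar-distributed coefficients, which yields $\Pr\pa*{E_\sigma \cap N_{\mathrm{top}}} = \pa*{1-1/p}\alpha$ at once. On $N_{\mathrm{low}}$ I would stratify further by $J := \max\set*{i < n : \rv_i \in \ZZ_p^\times}$. For $J \in \set*{1,\ldots,n-1}$ a Newton-polygon argument applies: the vertex $\pa*{J,0}$ is preceded by non-positive slopes and followed by a strictly positive slope up to $\pa*{n, v_p(\rv_n)}$ with $v_p(\rv_n) \geq 1$, so the Newton polygon of $f_n$ has at least two distinct slopes; hence $f_n$ factors in $\QQ_p\br*{X}$ and $A_n$ is not a field. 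Only $J = 0$ contributes, an event of probability $\frac{p-1}{p^{n+1}}$. For this case I would pass to the reciprocal polynomial $\tilde f_n(X) := X^n f_n(1/X)$, which satisfies $A(\tilde f_n) \cong A_n$; it has leading coefficient $\rv_0 \in \ZZ_p^\times$ and non-leading coefficients in $p\ZZ_p$, so dividing by $\rv_0$ yields a monic polynomial of degree $n$ congruent to $X^n \pmod p$. Conditioning on $\rv_0 = c \in \ZZ_p^\times$, the coefficients $\rv_i/c$ (for $1 \leq i \leq n$) of this monic polynomial are i.i.d.\ uniform on $p\ZZ_p$---precisely the $\beta$-conditioning---so $\Pr\pa*{E_\sigma \mid J=0,\rv_0 = c} = \beta$ for every $c$, and integrating yields $\Pr\pa*{E_\sigma \mid J=0} = \beta$. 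Combining gives $\Pr\pa*{E_\sigma \cap N} = \frac{p-1}{p^{n+1}}\pa*{p^n\alpha + \beta}$, whence $\rho = \frac{p-1}{p^{n+1}-1}\pa*{p^n\alpha + \beta}$, which is \eqref{eq:rho-formula} upon setting $\dfunc*_n(p) := \alpha\pa*{\sigma;p}$.

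It then remains to show that $\alpha$ is a rational function of $p$ and that $\beta = \dfunc*_n(p^{-1})$. For the former I would use the change-of-variables $\Ocl_{\QQ_{p^n}} \to \ZZ_p^n$ sending an element $\theta$ to the coefficients of its characteristic polynomial---a map that is $n$-to-$1$ on generators of $\QQ_{p^n}/\QQ_p$ with Jacobian of $p$-adic absolute value $\abs*{[\Ocl_{\QQ_{p^n}} : \ZZ_p\br*{\theta}]}_p$---to write
\[
  \alpha\pa*{\sigma;p} = \frac{1}{n}\int_{\theta\ \text{generates}\ \QQ_{p^n}} \abs*{[\Ocl_{\QQ_{p^n}} : \ZZ_p\br*{\theta}]}_p\, d\nu(\theta),
\]
with $\nu$ the normalized Haar measure on $\Ocl_{\QQ_{p^n}}$, and evaluate the integral by stratifying according to the subfield of $\FF_{p^n}$ containing the residue of $\theta$ and iterating on the Witt-vector expansion. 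The main obstacle, and the step I expect to require real work, is establishing the symmetry $\beta(p) = \alpha(p^{-1})$: I would attempt this by deriving a parallel integral representation for $\beta$ via the substitution $X \mapsto pY$ and matching term by term.
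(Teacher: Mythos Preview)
Your reduction of $\rho$ to a linear combination of $\alpha$ and $\beta$ via the Newton-polygon stratification is correct and is cleaner than the route the paper takes: the paper instead passes through expected root counts in $\Ocl_K$, $\maxideal[K]$ and $K\setminus\Ocl_K$ (using a formula of Caruso) and only afterwards identifies these with $\alpha$, $\beta$ and $\rho$. Your integral representation for $\alpha$ is also essentially the paper's: the Jacobian you write as $\abs{[\Ocl_{\QQ_{p^n}}:\ZZ_p[\theta]]}_p$ is exactly the function $\phi_{K/\QQ_p}(\theta)$ of Section~4, and the stratification by the residue subfield is the induction in Lemma~14, which produces the recursion defining $\dfunc_n$ and hence rationality.

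The genuine gap is the last step. Your proposed substitution $X\mapsto pY$ (equivalently $\theta\mapsto p\theta$ on the integral side) does give a relation, but it is
\[
\beta(\sigma;p)\;=\;p^{-\binom{n}{2}}\,\alpha(\sigma;p),
\]
not $\beta(\sigma;p)=\alpha(\sigma;p^{-1})$. With $\dfunc*_n(p):=\alpha(\sigma;p)$, what remains to prove is the functional equation $\dfunc*_n(t^{-1})=t^{-\binom{n}{2}}\dfunc*_n(t)$ for the rational function itself; knowing the numerical identity $\beta(p)=p^{-\binom{n}{2}}\dfunc*_n(p)$ at primes says nothing about $\dfunc*_n(p^{-1})$. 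This functional equation is precisely Proposition~2 of the paper (in the form $\dfunc_n(u^{-1},v^{-1})=v^{n-1}\dfunc_n(u,v)$), and it is the technical heart of the argument: the paper devotes all of Section~3 to it, building an incidence-algebra framework on the divisor poset, proving a general inversion lemma for a certain $\theta$-polynomial (Lemma~9), and then specializing. There is no evident ``term by term'' matching; as the paper remarks, the identity is an instance of the Denef--Meuser functional equation for Igusa zeta functions. So your outline correctly isolates where the difficulty lies, but the sentence you offer for it does not yet constitute a plan.
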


The function $\dfunc*_n$ in \autoref{main-thm} is given by explicit recursive formula:
Let $\dfunc_1\pa*{u, v} = 1$ and for $n > 1$ let
\begin{equation}
\label{eq:d-function-rec}
\dfunc_n \pa*{u, v} = \frac{1}{u^{n-1} - v^{n - 1}} \sum_{1\ne d \mid n} v^{n / d - 1} \pa*{\sum_{e\mid d} \mu\pa*{\frac{d}{e}}u^{e - 1}} \dfunc_{n/d} \pa*{u^d, v}.
\end{equation}
Here $\mu$ is the M\"obius function.
We then define $\dfunc*_n$ by
\begin{equation}
\label{eq:d-star-function}
\dfunc*_n \pa*{t} = \frac{1}{n} \dfunc_n\pa*{t, t^{-n/2}}.
\end{equation}
When $n$ is odd, the powers of $v$ on the right side of \eqref{eq:d-function-rec} are even except in the arguments of $\dfunc_{n/d}$, therefore an inductive argument gives that $\dfunc*_n$ is rational.


\subsection*{Acknowledgements}
I thank Itai Bar-Deroma and Sahar Diskin for their feedback on the research.
I also thank Eli Glasner for his support in the research and my supervisor, Lior Bary-Soroker for his conversions and ideas.

This research was partially supported by grants from the Israel Science Foundation, grant no.\ 702/19 and grant no.\ 1194/19.


\section{Notations and generalities}

Let $F$ be a $p$-adic field.
We denote its valuation ring by $\Ocl_F$ and its maximal ideal by $\maxideal[F]$.

\subsection{Absolute value}
We denote by $\pav[F]{\cdot}$ the $p$-adic absolute value normalized such that $\pav[F]{\pi}=q^{-1}$, where $\pi$ is the uniformizer of $F$ and $q$ is the size of the residue field of $F$.
In the case $F = \QQ_p$, we abbreviate and write $\pav{\cdot}$ instead of $\pav[\QQ_p]{\cdot}$.

Each absolute value $\pav[F]{\cdot}$ can be extended uniquely to an absolute value on the algebraic closure of $\QQ_p$, and all of those extensions are equivalent.
For field extension $K/F$ of degree $n$, we have that $\pav[K]{x} = \pav[F]{x}^n$ for any $x$ in the algebraic closure of $\QQ_p$.

\subsection{Haar measure}
We denote with $\lambda_F$ the normalized Haar measure on $F$, that is the unique measure which is locally finite, regular, invariant to translations and satisfy $\lambda_F\pa*{\Ocl_F} = 1$.
We have that for any $\alpha \in F$ and a Borel set $S\subseteq F$ then $\lambda_F\pa*{\alpha S} = \pav[F]{\alpha} \lambda_F\pa*{S}$.

In this paper, all the integrals on $F$ are Lebesgue integrals according to $\lambda_F$.

\subsection{Discriminant}

Let $K/F$ be a field extension of degree $n$.
For elements $\omega_1, \dots, \omega_n \in K$ we define their discriminant to be
\begin{equation*}
  \xdisc{\omega_1, \dots, \omega_n} = \det\pa*{\pa*{T_{K/F}\pa*{\omega_i \omega_j}}_{\substack{1\le i \le n \\ 1 \le j \le n}}},
\end{equation*}
where $T_{K/F}$ is the field trace.
If $L$ is $F$-linear map from $K$ to $K$ and mapping $\omega_1, \dots, \omega_n$ to $\omega'_1, \dots, \omega'_n$ then
\begin{equation}
  \label{eq:g:disc-conversion}
  \xdisc{\omega'_1, \dots, \omega'_n} = \pa*{\det L}^2 \xdisc{\omega_1, \dots, \omega_n}.
\end{equation}

In addition, for $x\in K$, we define the discriminant of $x$ to be $\xdisc{x} = \xdisc{1, x, \dots, x^{n-1}}$.
We have that $K = F\br*{x}$ if and only if $\xdisc{x} \ne 0$.
Moreover, if $K = F\br*{x}$ then $\xdisc{x}$ is the discriminant of the minimal polynomial over $F$.

When $\omega_1, \dots, \omega_n$ are $\Ocl_F$-basis of $\Ocl_K$ then $\xdisc{\omega_1, \dots, \omega_n}$ is the field discriminant of $K$ relative of $F$ which we denote with $\fdisc[K/F]$.
The field discriminant is not depends on the choice of $\omega_1, \dots, \omega_n$ up to multiplication by a unit.
Therefore, the absolute value of the discriminant, $\pav[F]{\fdisc[K/F]}$, is invariant to the choice of $\omega_1, \dots, \omega_n$.
Also, we have that $K/F$ is unramified if and only if the uniformizer of $F$ divides $\fdisc[K/F]$.


\section{The inversion formula}
\label{inc-alg}

In this section we prove an inversion formula for $\dfunc_n$:
\begin{prop}
\label{d-function-inverse}
For any $n \in \NN$,
\begin{equation*}
\dfunc_n\pa*{u^{-1}, v^{-1}} = v^{n-1} \dfunc_n\pa*{u, v}.
\end{equation*}
\end{prop}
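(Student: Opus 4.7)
The plan is strong induction on $n$. The base case $n = 1$ is immediate since $\dfunc_1 = 1$ and $v^{0} = 1$.

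For the inductive step, assume the formula for all $m < n$. Applying the recursion~\eqref{eq:d-function-rec} at $\pa*{u^{-1}, v^{-1}}$ and invoking the inductive hypothesis $\dfunc_{n/d}\pa*{u^{-d}, v^{-1}} = v^{n/d-1} \dfunc_{n/d}\pa*{u^d, v}$ for each $d > 1$ (valid because $n/d < n$), the powers of $v$ under the sum cancel. Rewriting the prefactor $1/\pa*{u^{-(n-1)} - v^{-(n-1)}} = -u^{n-1}v^{n-1}/(u^{n-1} - v^{n-1})$ and comparing with $v^{n-1}\dfunc_n\pa*{u,v}$ expanded via the same recursion, the claim $\dfunc_n\pa*{u^{-1}, v^{-1}} = v^{n-1}\dfunc_n\pa*{u, v}$ reduces to the auxiliary identity
\begin{equation*}
  \sum_{1 \ne d \mid n} \br*{v^{n/d-1} P_d(u) + u^{n-1} P_d\pa*{u^{-1}}} \dfunc_{n/d}\pa*{u^d, v} = 0, \qquad (\ast)
\end{equation*}
where $P_d(u) := \sum_{e \mid d} \mu(d/e) u^{e-1}$ denotes the inner M\"obius sum appearing in~\eqref{eq:d-function-rec}.

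The main obstacle is proving $(\ast)$, which is \emph{not} termwise zero: the summands for different $d$ cancel only because of the specific rational structure of $\dfunc_{n/d}\pa*{u^d, v}$. For instance when $n = 4$ the $d = 2$ summand equals $-(u-1)^2(u+1)$ (after using $\dfunc_2\pa*{u^2, v} = (u^2-1)/(u^2-v)$) and the $d = 4$ summand equals $+(u-1)^2(u+1)$, so they cancel. My strategy for general $n$ is to exploit the ``extended'' form of the recursion,
\begin{equation*}
  u^{m-1} \dfunc_m\pa*{u, v} = \sum_{d \mid m} v^{m/d - 1} P_d(u) \dfunc_{m/d}\pa*{u^d, v},
\end{equation*}
obtained from~\eqref{eq:d-function-rec} by absorbing the $d = 1$ term (using $P_1 = 1$) and whose $v = 1$ specialization is the classical M\"obius identity $\sum_{d \mid n} P_d(u) = u^{n-1}$. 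Iteratively substituting this extended recursion into $(\ast)$ and regrouping by the new index $m = de$ produces an expression in which each $\dfunc_{n/m}\pa*{u^m, v}$ carries an explicit polynomial coefficient in $u$ and $v$. Showing that this linear combination vanishes---i.e., identifying the cross-$m$ cancellations arising from the M\"obius weights, since the individual per-$m$ coefficients do not vanish---is the combinatorial heart of the proof and where I expect the main technical difficulty to lie.
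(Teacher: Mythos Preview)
Your reduction is correct: after applying the inductive hypothesis the claim is indeed equivalent to the identity $(\ast)$, and your $n=4$ check is accurate. However, the proposal stops precisely at the hard step. You explicitly say that identifying the cancellations in $(\ast)$ ``is the combinatorial heart of the proof and where I expect the main technical difficulty to lie,'' and you do not actually carry it out. The sketched strategy---iteratively substituting the extended recursion and regrouping by $m = de$---is not yet a proof: each substitution replaces one $\dfunc_{n/d}(u^d,v)$ by a sum of other $\dfunc_{n/m}(u^m,v)$'s, so you are only pushing the unknown terms around rather than eliminating them. If you unfold completely down to $\dfunc_1 = 1$, what you obtain is exactly the chain expansion
\[
\dfunc_n(u,v) \;=\; \sum_{(d_0,\dots,d_k)\in\Ccl^\ast_\NN(1,n)} \ \prod_{i=0}^{k-1}\frac{v^{n/d_{i+1}}\,\theta(d_i,d_{i+1};u)}{u^{\,n-d_i}v - v^{\,n/d_i}},
\]
and then the remaining task---matching each monomial on the two sides of the inversion---is precisely the combinatorial problem the paper solves, not something that falls out of the substitution.

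The paper takes an entirely different route: rather than induction on the recursion, it first proves a closed formula for $\dfunc_n$ (the chain sum above, equivalently \autoref{d-formula}) and then establishes the inversion at the level of that formula. The key device is an incidence-algebra identity (\autoref{tp:theta-inversion}) saying that for complementary subposets $Q,Q'$ of $[1,n]_\NN$ one has $\Inv_Q\theta(1,n;u) = -u^{n-1}\,\Inv_{Q'}\theta(1,n;u^{-1})$; substituting $u\mapsto u^{-1}$, $v\mapsto v^{-1}$ in the closed formula and pairing each $Q$ with its complement $\tilde Q$ then gives the result directly. So the ``cross-term cancellation'' you anticipate is handled in the paper not term by term but by this $Q\leftrightarrow\tilde Q$ symmetry of the explicit formula. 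Your inductive scheme does not expose that symmetry, and without a concrete mechanism for the cancellations in $(\ast)$ the proposal remains incomplete.
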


We prove this proposition using the theory of incidence algebras which we introduce below.
For more information on the subject see \cite{spiegel1997incidence}.

\subsection{Incidence algebras}
Let $\pa*{P, \le}$ be a poset.
For $x, y \in P$, we define the interval between $x$ and $y$ to be the subset
\begin{equation*}
\br*{x, y}_P = \set*{z \in P : x\le z \le y}.
\end{equation*}
We say that a poset $P$ is a \emph{locally finite poset} if $\br*{x,y}_P$ is finite set for each $x, y \in P$.

A sequence $\underline x = \pa*{x_0, \dots, x_k}$ of elements in $P$ is called a \emph{proper chain in $P$ of length $k + 1$} if
\begin{equation*}
x_0 < x_1 < \dots < x_k.
\end{equation*}
We also say that $\underline x$ starts at $x_0$ and ends at $x_k$.
In addition, let $\Ccl_P^k\pa*{x, y}$ denote the set of all proper chains of length $k + 1$ that start at $x$ and ends at $y$.
And, let $\Ccl_P^\ast\pa*{x, y}$ denotes the set of all proper chains that start at $x$ and ends at $y$, i.e. $\Ccl_P^\ast\pa*{x, y} = \bigcup_{k=0}^\infty \Ccl_P^k\pa*{x, y}$.

For two chains $\underline z, \underline w \in \Ccl_P^\ast\pa*{x, y}$ we say that \emph{$\underline w$ is finer than $\underline z$} if $\underline z$ is a sub-sequence of $\underline w$.
For a proper chain $\underline z = \pa*{z_0, \dots, z_k}$, we denote with $\Ccl_P^m\pa*{z_0, \dots, z_k}$ the set of all proper chains of length $m+1$ which are finer than $\bar z$.
Note that for any non-negative integer $m$ we have
\begin{equation}
\label{eq:ia:refinments-num}
\# \Ccl_P^m \pa*{z_0, \dots, z_k} = \sum_{m_1 + \dots + m_k = m} \; \prod_{i=1}^k \#\Ccl_P^{m_i} \pa*{z_{i-1}, z_i}.
\end{equation}

Let $P$ be a locally finite poset and let $A$ be a commutative ring with unity.
\emph{The incidence $A$-algebra of $P$}, denoted by $I_A\pa*{P}$, is the algebra whose elements are $\iae\colon P^2 \to A$ such that $\iae\pa*{x, y} = 0$ for all $x \not\le y$.
The operations are defined as followed:
\begin{align*}
\pa*{\iae_1 + \iae_2}\pa*{x, y} &= \iae_1\pa*{x,y} + \iae_2\pa*{x, y}, \\
\pa*{\iae_1 \ast \iae_2}\pa*{x, y} &= \sum_{x\le z \le y} \iae_1\pa*{x,z}\iae_2\pa*{z, y}, \quad\text{and} \\
\pa*{a \cdot \iae}\pa*{x,y} &= a \iae\pa*{x,y},
\end{align*}
for all $x,y \in P$, $\iae, \iae_1, \iae_2 \in I_A\pa*{P}$ and $a \in A$.

Let
\begin{equation*}
\delta\pa*{x, y} = \begin{cases}
1, & x = y, \\
0, & \text{otherwise}.
\end{cases}
\end{equation*}
Then $\delta$ is the identity element of $I_A\pa*{P}$.

We have the following lemmas describe the invertible elements in $I_A\pa*{P}$.
\begin{lem}
\label{ia:inverse}
An element $\iae \in I_A\pa*{P}$ is invertible if and only if $\iae\pa*{x, x}$ is invertible in $A$ for each $x\in P$.
In that case, $\iae^{-1}$ is defined recursively: for any $x \in P$, we have that $\iae^{-1}\pa*{x, x} = \iae\pa*{x, x}^{-1}$ and for any $x < y$
\begin{equation*}
\iae^{-1} \pa*{x, y} = - \iae\pa*{x, x}^{-1} \sum_{x < z \le y} \iae\pa*{x, z} \iae^{-1}\pa*{z,y}.
\end{equation*}
\end{lem}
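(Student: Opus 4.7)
The plan is to handle the two implications of the biconditional separately. For the forward direction, if $\eta = \iae^{-1}$ exists, then evaluating the convolution $\iae \ast \eta$ at a diagonal pair $\pa*{x,x}$ uses only the interval $\br*{x,x}_P = \set*{x}$, so the sum collapses to $\iae\pa*{x,x}\eta\pa*{x,x} = 1$; together with the symmetric identity from $\eta \ast \iae = \delta$, this exhibits each $\eta\pa*{x,x}$ as a two-sided inverse of $\iae\pa*{x,x}$ in $A$.

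For the reverse direction I would first verify well-definedness of the element $\eta \in I_A\pa*{P}$ specified by the recursion. Since $P$ is locally finite, every interval $\br*{x,y}_P$ is finite, and the value $\eta\pa*{x,y}$ depends only on the values $\eta\pa*{z,y}$ for $z \in \br*{x,y}_P$ with $z > x$. Building $\eta\pa*{\cdot, y}$ by induction on the cardinality of $\set*{z \in P : x < z \le y}$, starting from $\eta\pa*{y,y} = \iae\pa*{y,y}^{-1}$, produces a well-defined function in $I_A\pa*{P}$. Then I would verify $\iae \ast \eta = \delta$ directly: the diagonal case reads $\iae\pa*{x,x}\iae\pa*{x,x}^{-1} = 1$, while for $x < y$ the split
\begin{equation*}
\pa*{\iae \ast \eta}\pa*{x, y} = \iae\pa*{x,x}\eta\pa*{x,y} + \sum_{x < z \le y} \iae\pa*{x,z}\eta\pa*{z,y}
\end{equation*}
vanishes exactly because of the defining recursion for $\eta\pa*{x,y}$.

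Finally, to promote this right inverse to a two-sided one, I would mirror the construction to produce $\eta' \in I_A\pa*{P}$ satisfying $\eta' \ast \iae = \delta$, via the analogous left recursion $\eta'\pa*{x,y} = -\pa*{\sum_{x \le z < y} \eta'\pa*{x,z}\iae\pa*{z,y}}\iae\pa*{y,y}^{-1}$, proved by a symmetric computation. Associativity of $\ast$ then gives $\eta' = \eta' \ast \pa*{\iae \ast \eta} = \pa*{\eta' \ast \iae} \ast \eta = \eta$, so this common element is $\iae^{-1}$ and matches the stated recursive formula. The only delicate point is organizing the induction so that each value $\eta\pa*{x,y}$ is defined strictly before it is used, and remembering to construct both one-sided inverses before invoking associativity; once that bookkeeping is set, every identity reduces to routine manipulation of finite sums over intervals.
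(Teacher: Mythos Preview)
Your argument is correct and is the standard proof of this fact. The paper does not give its own proof but simply cites \cite[Theorem~1.2.3]{spiegel1997incidence}, whose proof proceeds along essentially the same lines as yours.
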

\begin{proof}
See \cite[Theroem 1.2.3]{spiegel1997incidence} and its proof.
\end{proof}

\begin{lem}
\label{ia:chain-inverse}
Assume that $\iae\pa*{x, x} = 1$ for all $x \in P$. Then, for all $x, y \in P$,
\begin{equation*}
\iae^{-1} \pa*{x, y} = \sum_{\pa*{z_0, \dots, z_k} \in \Ccl_P^\ast\pa*{x, y}} \pa*{-1}^k \prod_{i=0}^{k-1} \iae\pa*{z_i, z_{i+1}}.
\end{equation*}
\end{lem}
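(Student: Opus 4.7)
The plan is to prove Lemma \ref{ia:chain-inverse} by induction on the cardinality of the interval $\br*{x, y}_P$, using the recursive characterization of $\iae^{-1}$ given by Lemma \ref{ia:inverse}. Since $P$ is locally finite, this cardinality is a well-defined positive integer, and once we fix $y$ the interval $\br*{z, y}_P$ for $x < z \le y$ is a strict subset of $\br*{x, y}_P$, so the induction is well-founded.

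First I would handle the base case $x = y$: by Lemma \ref{ia:inverse}, $\iae^{-1}\pa*{x,x} = \iae\pa*{x,x}^{-1} = 1$. On the right-hand side, the only proper chain in $\Ccl_P^\ast\pa*{x,x}$ is the length-$1$ chain $\pa*{x}$ (with $k=0$), whose contribution is $\pa*{-1}^0$ times the empty product, namely $1$. So both sides equal $1$.

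For the inductive step, assume $x < y$ and that the claimed formula holds for every interval $\br*{z, y}_P$ with $x < z \le y$. Applying Lemma \ref{ia:inverse} and the inductive hypothesis,
\begin{equation*}
\iae^{-1}\pa*{x, y} = -\sum_{x < z \le y} \iae\pa*{x, z} \sum_{\pa*{z_1, \dots, z_k} \in \Ccl_P^\ast\pa*{z, y}} \pa*{-1}^{k-1} \prod_{i=1}^{k-1} \iae\pa*{z_i, z_{i+1}},
\end{equation*}
where I have indexed the inner chains as $z = z_1 < z_2 < \dots < z_k$. Setting $z_0 := x$, each pair $\pa*{z, \pa*{z_1, \dots, z_k}}$ with $z = z_1$ corresponds bijectively to a proper chain $\pa*{z_0, z_1, \dots, z_k} \in \Ccl_P^\ast\pa*{x, y}$ of length $\ge 2$; conversely, since $x < y$ every chain in $\Ccl_P^\ast\pa*{x,y}$ has length at least $2$, so this bijection enumerates all of $\Ccl_P^\ast\pa*{x,y}$. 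Collecting the sign $-\pa*{-1}^{k-1} = \pa*{-1}^k$ and folding $\iae\pa*{x, z} = \iae\pa*{z_0, z_1}$ into the product yields exactly the claimed formula.

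The only subtlety — hardly an obstacle — is the re-indexing bookkeeping: one must check that the map from pairs $\pa*{z, \text{chain from } z \text{ to } y}$ to chains from $x$ to $y$ of length $\ge 2$ is a bijection, which is immediate from the definitions once one notes that in a proper chain $x < z_1$ forces the second entry to be the ``$z$'' summed over in Lemma \ref{ia:inverse}. An alternative, equally clean route would be to define $\eta\pa*{x,y}$ by the right-hand side of the lemma and directly verify $\iae \ast \eta = \delta$ by splitting the convolution sum at $z = x$; but the inductive proof above is shorter and re-uses Lemma \ref{ia:inverse} as a black box.
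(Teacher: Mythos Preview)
Your proof is correct and follows essentially the same approach as the paper: both argue by induction on the interval (the paper phrases it as ``induction on $x$''), invoke Lemma~\ref{ia:inverse} for the recursive formula, apply the inductive hypothesis to each $\iae^{-1}\pa*{z,y}$, and then observe that summing over $x < z \le y$ together with chains from $z$ to $y$ is the same as summing over all chains from $x$ to $y$. The only cosmetic difference is your indexing of the inner chains starting at $z_1$ rather than $z_0$.
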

\begin{proof}
We prove this with induction on $x$.
For the induction base $x = y$, and by \autoref{ia:inverse} we get that $\iae^{-1}\pa*{x,x} = 1$.
Moreover, the sum on the right side of the equation is also $1$ since there is a single chain from $x$ to $x$.

For the induction step, let $x < y$.
By \autoref{ia:inverse} we have that
\begin{equation*}
  \iae^{-1}\pa*{x, y} = - \sum_{x < z \le y} \iae\pa*{x, z} \iae^{-1}\pa*{z,y}.
\end{equation*}
We use the induction assumption to obtain
\begin{equation*}
  \iae^{-1}\pa*{x, y} = - \sum_{x < z \le y} \iae\pa*{x, z} \sum_{\pa*{z_0, \dots, z_k} \in \Ccl_P^\ast\pa*{z, y}} \pa*{-1}^k \prod_{i=0}^{k-1} \iae\pa*{z_i, z_{i+1}}.
\end{equation*}
We rearrange the factors
\begin{equation*}
  \iae^{-1}\pa*{x, y} = \sum_{x < z \le y}  \sum_{\pa*{z_0, \dots, z_k} \in \Ccl_P^\ast\pa*{z, y}} \pa*{-1}^{k+1} \iae\pa*{x, z} \prod_{i=0}^{k-1} \iae\pa*{z_i, z_{i+1}}.
\end{equation*}
Running over all $x < z \le y$ and then over all chains $\pa*{z_0, \dots, z_k} \in \Ccl_P^\ast\pa*{z, y}$ is the same as running over all chains $\pa*{w_0, \dots, w_k} \in \Ccl_P^\ast\pa*{x, y}$ with $z$ being the first element in the chain after $x$.
Thus,
\begin{equation*}
  \iae^{-1}\pa*{x, y} = \sum_{\pa*{w_0, \dots, w_k} \in \Ccl_P^\ast\pa*{z, y}} \pa*{-1}^{k} \prod_{i=0}^{k-1} \iae\pa*{w_i, w_{i+1}}. \qedhere
\end{equation*}
\end{proof}

Let $Q \subseteq P$, by abuse of notation we let $Q$ denotes the poset composed of the elements of $Q$ and the order $\le$ restricted to $Q^2$.
For $\iae \in I_A\pa*{P}$ we have that $\iae|_Q \in I_A\pa*{Q}$.
For $\iae, \iae_1, \iae_2 \in I_A\pa*{P}$, we denote with $\iae_1 \ast_Q \iae_2$ the multiplication of $\iae_1|_Q$ and $\iae_2|_Q$ in $I_A\pa*{Q}$ and with $\Inv_Q\iae$ the inverse of $\iae|_Q$ in $I_A\pa*{Q}$.

\begin{cor}
  \label{ia:equivalence-posets-inverse}
  Let $Q, Q' \subseteq P$ and $x,y \in P$ such that $\br*{x, y}_Q = \br*{x, y}_{Q'}$.
  Then for all $\iae \in I_A\pa*{P}$
  \begin{equation*}
    \Inv_Q \iae\pa*{x, y} = \Inv_{Q'} \iae\pa*{x,y}.
  \end{equation*}
\end{cor}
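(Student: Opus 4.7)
The plan is to exploit the recursive characterization of the inverse in \autoref{ia:inverse} and induct on the size of the common interval $\br*{x,y}_Q = \br*{x,y}_{Q'}$, showing that the recursion never leaves this interval.

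The key preliminary observation is that if $\br*{x,y}_Q = \br*{x,y}_{Q'}$ and $z$ lies in this common interval, then $\br*{z,y}_Q = \br*{z,y}_{Q'}$. Indeed, any $w \in \br*{z,y}_Q$ satisfies $x \le z \le w \le y$ with $w \in Q$, so $w \in \br*{x,y}_Q = \br*{x,y}_{Q'}$, giving $w \in Q'$ and hence $w \in \br*{z,y}_{Q'}$; the reverse inclusion is symmetric.

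With this in hand, the induction on $\#\br*{x,y}_Q$ is routine. The base case $x = y$ is immediate, since \autoref{ia:inverse} gives $\Inv_Q \iae\pa*{x,x} = \iae\pa*{x,x}^{-1} = \Inv_{Q'} \iae\pa*{x,x}$. For $x < y$, apply the recursion from \autoref{ia:inverse} in $Q$:
\begin{equation*}
\Inv_Q \iae\pa*{x, y} = -\iae\pa*{x,x}^{-1} \sum_{\substack{z \in Q \\ x < z \le y}} \iae\pa*{x, z}\, \Inv_Q \iae\pa*{z, y}.
\end{equation*}
The index set $\set*{z \in Q : x < z \le y} = \br*{x,y}_Q \setminus \set*{x}$ equals $\br*{x,y}_{Q'} \setminus \set*{x}$, and for each such $z$ the interval $\br*{z,y}_Q = \br*{z,y}_{Q'}$ is strictly smaller than $\br*{x,y}_Q$ (it omits $x$), so the inductive hypothesis yields $\Inv_Q \iae\pa*{z,y} = \Inv_{Q'} \iae\pa*{z,y}$. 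Substituting produces precisely the recursive expression for $\Inv_{Q'} \iae\pa*{x,y}$, completing the induction.

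There is no real obstacle here; the corollary is essentially a formal consequence of the fact that \autoref{ia:inverse}'s recursion is intrinsic to the interval $\br*{x,y}$ and never consults elements of the ambient poset lying outside it.
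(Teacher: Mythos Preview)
Your proof is correct, but it takes a different route from the paper. The paper's own proof is a one-liner: it invokes \autoref{ia:chain-inverse} and simply notes that $\Ccl_Q^\ast\pa*{x,y} = \Ccl_{Q'}^\ast\pa*{x,y}$, since any proper chain from $x$ to $y$ stays inside the common interval $\br*{x,y}_Q = \br*{x,y}_{Q'}$. You instead induct directly on the interval size using the recursion of \autoref{ia:inverse}. Your argument is a bit longer, but it has the advantage of applying to any $\iae$ for which $\Inv_Q\iae$ makes sense (i.e.\ $\iae\pa*{z,z}$ invertible), whereas the paper's appeal to \autoref{ia:chain-inverse} formally needs the stronger hypothesis $\iae\pa*{z,z}=1$ for all $z$. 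In the paper this gap is harmless --- the corollary is only ever used with $\iae = \mu$, which satisfies that hypothesis --- but your proof actually matches the generality of the statement as written.
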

\begin{proof}
  This is immediate from \autoref{ia:chain-inverse} since $\Ccl_Q^\ast\pa*{x,y} = \Ccl_{Q'}^\ast\pa*{x,y}$.
\end{proof}

We add a definition that will be useful in the next subsections.
\begin{defn}
  Let $x \le y$ and let $Q, Q' \subseteq P$, we say that \emph{$Q$ and $Q'$ are complementing the interval ${\br*{x, y}_P}$} if $x, y \in Q \cap Q'$ and for any $x < z < y$ then either $z \in Q$ or $z \in Q'$ but not both.
\end{defn}

\subsection{The M\"obius function}

We define the following function $\zeta \in I_A\pa*{P}$ by
\begin{equation*}
  \zeta\pa*{x, y} = \begin{cases}
  1, & x \le y, \\
  0, & \text{otherwise}.
  \end{cases}
\end{equation*}
By \autoref{ia:inverse} we have that $\zeta$ is invertible and its inverse, $\mu$, satisfy
\begin{equation}
\label{eq:ia:mobius-rec}
\mu \pa*{x, y} = - \sum_{x < z \le y} \mu\pa*{z, y}.
\end{equation}
Moreover, since $\zeta\pa*{x, x} = 1$ for all $x \in P$, \autoref{ia:chain-inverse} gives that
\begin{equation}
\label{eq:ia:mobius-chains}
\mu \pa*{x, y} = \sum_{k=0}^\infty \pa*{-1}^k \#\Ccl_P^k\pa*{x, y}.
\end{equation}
We call $\mu$ the \emph{M\"obius function on $P$}.
For $Q \subseteq P$, we denote by $\mu_Q$ the M\"obius function on $Q$, i.e. $\mu_Q = \Inv_Q \zeta$.

\begin{lem}
\label{ia:moebius-completition}
Let $x < y$ and let $Q, Q' \subseteq P$ which are complementing the interval ${\br*{x, y}_P}$.
Then
\begin{equation*}
\Inv_Q\mu\pa*{x, y} = - \mu_{Q'}\pa*{x, y}.
\end{equation*}
\end{lem}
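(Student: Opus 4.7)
The plan is to expand $\Inv_Q \mu(x, y)$ via a double application of the chain formulae, interpret the resulting double sum as a sum over pairs (chain in $P$, sub-chain in $Q$), and then cancel signs so that only chains lying entirely in $Q'$ survive.

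Since $\mu(z, z) = 1$ for every $z \in Q$, \autoref{ia:chain-inverse} applied to $\mu|_Q$ gives
\begin{equation*}
  \Inv_Q \mu\pa*{x, y} = \sum_{\underline z = \pa*{z_0, \dots, z_m} \in \Ccl_Q^\ast\pa*{x, y}} \pa*{-1}^m \prod_{i=0}^{m-1} \mu\pa*{z_i, z_{i+1}}.
\end{equation*}
Next, I would expand each factor $\mu(z_i, z_{i+1})$ via \eqref{eq:ia:mobius-chains}, rewriting the right-hand side as a sum over tuples $\pa*{\underline z, \underline w^{(0)}, \dots, \underline w^{(m-1)}}$ with $\underline w^{(i)} \in \Ccl_P^\ast\pa*{z_i, z_{i+1}}$. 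Concatenating the $\underline w^{(i)}$ along the shared endpoints $z_i$ produces a chain $\underline w \in \Ccl_P^\ast\pa*{x, y}$ having $\underline z$ as a sub-chain, and this sets up a bijection between such tuples and pairs $\pa*{\underline w, S}$, where $\underline w \in \Ccl_P^\ast\pa*{x, y}$ and $S$ is any sub-chain of $\underline w$ lying in $Q$ and containing both endpoints.

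A short bookkeeping shows each pair contributes $\pa*{-1}^{\abs*{S} + \abs*{\underline w}}$, where $\abs*{\cdot}$ counts elements. For fixed $\underline w$, write $S = \set*{x, y} \cup S'$ with $S' \subseteq R$, where $R$ is the set of intermediate elements of $\underline w$ lying in $Q$. The inner sum over $S$ then collapses to
\begin{equation*}
  \pa*{-1}^{\abs*{\underline w}} \sum_{S' \subseteq R} \pa*{-1}^{\abs*{S'}} = \pa*{-1}^{\abs*{\underline w}} \pa*{1 - 1}^{\abs*{R}},
\end{equation*}
which vanishes unless $R = \emptyset$.

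Hence only chains $\underline w$ whose interior elements all lie outside $Q$ contribute, and by the complementarity hypothesis every such interior element lies in $Q'$, so $\underline w \in \Ccl_{Q'}^\ast\pa*{x, y}$. Summing their contributions and applying \eqref{eq:ia:mobius-chains} to $Q'$ gives
\begin{equation*}
  \Inv_Q \mu\pa*{x, y} = \sum_{\underline w \in \Ccl_{Q'}^\ast\pa*{x, y}} \pa*{-1}^{\abs*{\underline w}} = -\mu_{Q'}\pa*{x, y},
\end{equation*}
as desired. The main subtlety is the sign tracking through the bijection; the combinatorial step—writing each chain in $P$ as a refinement of its $Q$-sub-chain—is immediate from the complementarity hypothesis.
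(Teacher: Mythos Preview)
Your proof is correct and follows essentially the same approach as the paper's: both expand $\Inv_Q\mu$ via \autoref{ia:chain-inverse}, then expand each $\mu$-factor via \eqref{eq:ia:mobius-chains}, and then perform the same cancellation---you phrase it as a bijection with pairs $(\underline w,S)$ and the binomial identity $(1-1)^{|R|}$, while the paper groups by chain length, invokes \eqref{eq:ia:refinments-num}, and states the same cancellation as an inclusion--exclusion for $\#\Ccl_{Q'}^m(x,y)$. These are the same argument in slightly different packaging.
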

\begin{proof}
By \autoref{ia:chain-inverse} we have
\begin{equation*}
\Inv_Q\mu\pa*{x, y} = \sum_{\pa*{z_0, \dots, z_k} \in \Ccl_Q^\ast\pa*{x, y}} \pa*{-1}^k \prod_{i=1}^k \mu\pa*{z_{i-1}, z_i}.
\end{equation*}
By \eqref{eq:ia:mobius-chains} we get
\begin{equation*}
\Inv_Q\mu\pa*{x, y} = \sum_{\pa*{z_0, \dots, z_k} \in \Ccl_Q^\ast\pa*{x, y}} \pa*{-1}^k \prod_{i=1}^k \pa*{\sum_{m=0}^\infty \pa*{-1}^m \#\Ccl_P^m\pa*{z_{i-1}, z_i}}.
\end{equation*}
We expand the multiplication and then use \eqref{eq:ia:refinments-num} to obtain
\begin{align*}
\Inv_Q\mu\pa*{x, y}
  &= \sum_{\pa*{z_0, \dots, z_k} \in \Ccl_Q^\ast\pa*{x, y}} \pa*{-1}^k \sum_{m=0}^\infty \pa*{-1}^m \sum_{m_1 + \dots + m_k = m} \; \prod_{i=1}^k \#\Ccl_P^{m_i}\pa*{z_{i-1}, z_i} \\
  &= \sum_{\pa*{z_0, \dots, z_k} \in \Ccl_Q^\ast\pa*{x, y}} \pa*{-1}^k \sum_{m=0}^\infty \pa*{-1}^m  \#\Ccl_P^m\pa*{z_0, \dots, z_k}.
\end{align*}
Changing the order of summation gives that
\begin{equation}
\label{eq:ia:mc:inv-q-sum}
\Inv_Q\mu\pa*{x, y} = \sum_{m=0}^\infty \pa*{-1}^m \sum_{\pa*{z_0, \dots, z_k} \in \Ccl_Q^\ast\pa*{x, y}} \pa*{-1}^k \#\Ccl_P^m\pa*{\bar z}.
\end{equation}

From the inclusion–exclusion principle we have that
\begin{align*}
\#\Ccl_{Q'}^m\pa*{x, y}
  &= \#\Ccl_P^m\pa*{x, y} - \sum_{\substack{x < z_1 < y \\ z_1 \in Q}} \#\Ccl_P^m\pa*{x, z_1, y} + \sum_{\substack{x < z_1 < z_2 < y \\ z_1, z_2 \in Q}} \#\Ccl_P^m\pa*{x, z_1, z_2, y} - \dots \\
  &= - \sum_{\pa*{z_0, \dots, z_k} \in \Ccl_Q^\ast\pa*{x, y}} \pa*{-1}^k \#\Ccl_P^m\pa*{z_0, \dots, z_k}.
\end{align*}
We plug this into \eqref{eq:ia:mc:inv-q-sum} and get that
\begin{equation*}
\Inv_Q\mu\pa*{x, y} = - \sum_{m=0}^\infty \pa*{-1}^m \#\Ccl_{Q'}^m\pa*{x, y}.
\end{equation*}
Applying \autoref{ia:chain-inverse} on left side of the equation finishes the proof.
\end{proof}

We define the function $\Gamma_Q : P \times Q \to A$ by
\begin{equation}
  \label{eq:ia:gamma-def}
\Gamma_Q\pa*{x, y} = \sum_{\substack{x \le z \le y \\ z \in Q}}\mu\pa*{x, z} \Inv_Q \mu\pa*{z, y}.
\end{equation}

\begin{lem}
\label{ia:gamma-formula}
Let $Q\subseteq P$. Then for all $x \in P$ and $y \in Q$ we have that
\begin{equation*}
\Gamma_Q\pa*{x, y} = \begin{cases}
- \Inv_{Q\cup \set*{x}}\mu \pa*{x, y}, &x \notin Q, \\
1, & x = y, \\
0, & \text{otherwise}.
\end{cases}
\end{equation*}
\end{lem}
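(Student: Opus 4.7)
The plan is a case analysis on the position of $x$ relative to $Q$ and $y$, using only the recursive characterization of the inverse (Lemma \ref{ia:inverse}) and the interval-invariance Corollary \ref{ia:equivalence-posets-inverse}.

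The two easy cases come first. When $x = y$, the defining sum \eqref{eq:ia:gamma-def} collapses to the single term $z = y$, giving $\mu(y, y)\,\Inv_Q\mu(y, y) = 1$. When $x \in Q$ but $x \ne y$, I would observe that for $z \in Q$ the value $\mu(x, z)$ agrees with $(\mu|_Q)(x, z)$, so the defining sum is precisely the convolution $(\mu|_Q \ast_Q \Inv_Q\mu)(x, y)$ in $I_A(Q)$. Since $\Inv_Q\mu$ is by definition the inverse of $\mu|_Q$ in $I_A(Q)$, this product equals $\delta(x, y) = 0$. (The sub-case $x \in Q$ with $x \not\le y$ is trivial, as the sum is empty.)

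The interesting case is $x \notin Q$. Here the plan is to apply Lemma \ref{ia:inverse} to $\Inv_{Q \cup \{x\}}\mu$ at $(x, y)$ and recognize the result as $-\Gamma_Q(x, y)$. Using $\mu(x, x) = 1$, that recursion reads
\begin{equation*}
\Inv_{Q \cup \{x\}}\mu(x, y) = -\sum_{\substack{x < z \le y \\ z \in Q \cup \{x\}}} \mu(x, z)\, \Inv_{Q \cup \{x\}}\mu(z, y),
\end{equation*}
and the constraint $z > x$ forces $z \in Q$. For such a $z$, the intervals $[z, y]_Q$ and $[z, y]_{Q \cup \{x\}}$ coincide (since $x < z$), so Corollary \ref{ia:equivalence-posets-inverse} allows me to replace $\Inv_{Q \cup \{x\}}\mu(z, y)$ by $\Inv_Q\mu(z, y)$. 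Because $x \notin Q$, the defining sum for $\Gamma_Q(x, y)$ ranges over exactly the same set of $z$'s with the same summands, yielding $\Gamma_Q(x, y) = -\Inv_{Q \cup \{x\}}\mu(x, y)$. The whole argument is a bookkeeping exercise whose only substantive input is Corollary \ref{ia:equivalence-posets-inverse}; I anticipate no serious obstacle.
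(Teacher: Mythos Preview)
Your proposal is correct and follows essentially the same route as the paper: the $x\in Q$ case is handled by recognizing $\Gamma_Q(x,y)$ as the convolution $\mu\ast_Q\Inv_Q\mu=\delta$ in $I_A(Q)$, and the $x\notin Q$ case is handled by expanding $\Inv_{Q\cup\{x\}}\mu(x,y)$ via Lemma~\ref{ia:inverse} and then invoking Corollary~\ref{ia:equivalence-posets-inverse} to pass from $\Inv_{Q\cup\{x\}}\mu(z,y)$ to $\Inv_Q\mu(z,y)$. The only cosmetic difference is that you separate $x=y$ as its own sub-case, whereas the paper absorbs it into the $x\in Q$ computation.
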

\begin{proof}
We start with the case of $x \in Q$.
In this case we can write $\Gamma_Q\pa*{x, y}$ as multiplication in $I_A\pa*{Q}$ as follows
\begin{equation*}
\Gamma_Q\pa*{x, y}
  = \pa*{\mu \ast_Q \Inv_Q\mu}\pa*{x, y}.
\end{equation*}
Clearly, $\mu\ast_Q \Inv_Q\mu = \delta$ and this finish the case when $x\in Q$.

If $x \notin Q$, then we have by \autoref{ia:inverse} that
\begin{equation*}
- \Inv_{Q\cup \set*{x}}\mu \pa*{x, y}
  = \sum_{\substack{x < z \le y \\ z \in Q\cup \set*{x}}} \mu\pa*{x, z} \Inv_{Q\cup \set*{x}}\mu \pa*{z, y}.
\end{equation*}
From \autoref{ia:equivalence-posets-inverse} we conclude that $\Inv_{Q\cup\set*{x}} \mu\pa*{z, y} = \Inv_Q \mu\pa*{z, y}$ for all $x < z$.
Hence,
\begin{equation*}
- \mu_{Q\cup \set*{y}} \pa*{x, y}
  = \sum_{\substack{x \le z \le y \\ z \in Q}} \mu\pa*{x, z} \Inv_{Q}\mu \pa*{z, y} \\
  = \Gamma_Q\pa*{x, y}. \qedhere
\end{equation*}
\end{proof}

\subsection{The \texorpdfstring{$\theta$}{theta} polynomial}

Set $A_P = \QQ\br*{t^\pm_x : x \in P}$ where $t_x$ are algebraically independent elements over $\QQ$.
We define the following function $\theta \in I_{A_P}\pa*{P}$ by
\begin{equation}
\label{eq:theta-poly}
\theta\pa*{x, y; \underline{t}} = \sum_{x\le z \le y} \mu \pa*{z, y} \frac{t_z}{t_x}
\end{equation}
If there is no risk for ambiguity we write $\theta\pa*{x, y}$ instead of $\theta\pa*{x, y; \underline{t}}$.
Note that $\theta\pa*{x, x} = 1$ for all $x \in P$, hence $\theta$ is invertible by \autoref{ia:inverse}.
We are interested in finding $\Inv_Q \theta$ for different subposets $Q \subseteq P$.

\begin{defn}
  Let $Q\subseteq P$.
  We say that a monomial $m \in A_P$ is \emph{$Q$-admissible} if there exists
  $z_1, \dots, z_k, y \in Q$ and $w_1, \dots, w_k \in P$ such that
  \begin{gather*}
    m = \frac{t_{w_1}}{t_{z_1}} \cdots \frac{t_{w_k}}{t_{z_k}}, \qquad \text{and} \\
    z_1 \le w_1 \le z_2 \le w_2 \le \dots \le z_k \le w_k \le y.
  \end{gather*}
  If $\frac{t_{w_1}}{t_{z_1}} \cdots \frac{t_{w_k}}{t_{z_k}}$ is a reduced fraction that is, $w_i \ne z_j$ for all $i$ and $j$, we call it \emph{the admissible form of $m$}.
\end{defn}
We denote by $\Mcl\pa*{Q}$ the set of all $Q$-admissible monomials.

\begin{lem}
\label{tp:theta-inv-coeffs}
Let $Q\subseteq P$ and let $x, y \in Q$. Then $\Inv_Q\theta\pa*{x, y} \in \Span_\QQ \Mcl\pa*{\br*{x, y}_Q}$.
Moreover, let $m \in \Mcl\pa*{\br*{x, y}_Q}$ with admissible form $\frac{t_{w_1}}{t_{z_1}}\cdots \frac{t_{w_k}}{t_{z_k}}$ then the coefficient of $m$ in $\Inv_Q\theta\pa*{x, y}$ is:
\begin{equation}
  \label{eq:tp:theta-inv-coeffs}
\br*{\Inv_Q\theta\pa*{x, y}}_m = \Inv_Q \mu\pa*{x, z_1} \prod_{i=1}^k \mu_{Q_i} \pa*{z_i, w_i} \, \Gamma_Q\pa*{w_i, z_{i+1}} ,
\end{equation}
where $z_{k+1} = y$ and $Q_i = Q \cup \set*{w_i}$.
\end{lem}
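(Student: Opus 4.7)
The plan is to argue by strong induction on $\#\br*{x,y}_Q$. The base case $x = y$ is immediate: $\Inv_Q\theta\pa*{x, x} = 1$, the only admissible monomial is the empty product (so $k = 0$), and the formula collapses to $\Inv_Q\mu\pa*{x, x} = 1$.

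For the inductive step with $x < y$, I would apply \autoref{ia:inverse} to write
\begin{equation*}
\Inv_Q\theta\pa*{x, y} = -\sum_{\substack{q \in Q \\ x < q \le y}} \theta\pa*{x, q}\Inv_Q\theta\pa*{q, y},
\end{equation*}
expand $\theta\pa*{x, q} = \sum_{x \le w \le q}\mu\pa*{w, q}\frac{t_w}{t_x}$, and insert the inductive formula for $\Inv_Q\theta\pa*{q, y}$. The coefficient of the target admissible monomial $m$ in $\Inv_Q\theta\pa*{x, y}$ then becomes a sum over triples $\pa*{q, w, m'}$ such that $\frac{t_w}{t_x}\cdot m'$ reduces to $m$. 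The key observation is that every numerator $t_{w'}$ appearing in a $\br*{q, y}_Q$-admissible $m'$ satisfies $w' \ge q > x$, so $t_x$ can never cancel against anything in $m'$; this makes the enumeration of contributing triples short and explicit.

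I would then split into the cases $z_1 > x$ and $z_1 = x$. If $z_1 > x$, only $w = x$ contributes, and the remaining $q$-sum is exactly the one appearing in the Möbius orthogonality relation $\pa*{\mu \ast_Q \Inv_Q\mu}\pa*{x, z_1} = \delta\pa*{x, z_1} = 0$, which collapses the prefactor into $\Inv_Q\mu\pa*{x, z_1}$ as required. If $z_1 = x$, two families of triples contribute. In the first, $w = w_1$ and $m'$ equals the tail $\frac{t_{w_2}}{t_{z_2}}\cdots\frac{t_{w_k}}{t_{z_k}}$ of $m$, with $q$ ranging over $\br*{w_1, z_2}_Q$; this $q$-sum is exactly $\Gamma_Q\pa*{w_1, z_2}$ by the definition \eqref{eq:ia:gamma-def}. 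In the second family---the \emph{promotion} case---$w = q$ lies in $Q$ strictly between $x$ and $w_1$, and $m' = \frac{t_{w_1}}{t_q}\cdot\frac{t_{w_2}}{t_{z_2}}\cdots$, with the leading denominator $t_q$ of $m'$ cancelled by the numerator $t_w = t_q$. Summing the prefactors from both families yields $-\Gamma_Q\pa*{w_1, z_2}\br*{1 + \sum_{q \in Q,\, x < q < w_1}\mu_{Q_1}\pa*{q, w_1}}$, which I would simplify to $\mu_{Q_1}\pa*{x, w_1}\Gamma_Q\pa*{w_1, z_2}$ using the Möbius identity $\sum_{q \in Q_1,\, x \le q < w_1}\mu_{Q_1}\pa*{q, w_1} = -1$ on the extended poset $Q_1 = Q\cup\set*{w_1}$.

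The main obstacle is the careful enumeration of contributing triples in the $z_1 = x$ case: the \emph{promotion} triples are easy to overlook, and they are precisely what converts a naive $\mu_Q\pa*{x, w_1}$ into the correct $\mu_{Q_1}\pa*{x, w_1}$, accounting for the possibility that $w_1 \notin Q$. Once these triples are identified, the entire argument reduces to the two Möbius orthogonality relations on $Q$ and $Q_1$.
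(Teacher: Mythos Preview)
Your proposal is correct and follows essentially the same route as the paper's proof: induction via \autoref{ia:inverse}, the case split on whether $z_1 = x$, and in the $z_1 = x$ case the identification of exactly the two contributing families (your ``$w = w_1$'' family is the paper's summands with $w_1 \le z \le z_2$, and your ``promotion'' family is the paper's summands with $z_1 < z < w_1$), followed by the same M\"obius simplifications. The only cosmetic difference is that where the paper invokes the recursive formula of \autoref{ia:inverse} to collapse the $q$-sums, you phrase the same step as the orthogonality relations $\mu \ast_Q \Inv_Q\mu = \delta$ and $\zeta \ast_{Q_1} \mu_{Q_1} = \delta$.
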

\begin{proof}
We prove this with induction on $x$.
For the induction base we have that $x = y$.
In this case, $\Mcl\pa*{\br*{x, y}_Q} = \set*{1}$ and $\Inv_Q\theta\pa*{x, y} = 1$ so the lemma is clear.

For the induction step we have $y > x$, and we assume that for all $y \ge z > x$ we have $\Inv_Q\theta\pa*{z, y} \in \Span_\QQ \Mcl\pa*{\br*{z, y}_Q}$ and the coefficients of $\Inv_Q \theta\pa*{z, y}$ are as described in this lemma.

By \autoref{ia:inverse},
\begin{equation}
\label{eq:tp:tic:inverse-formula}
\Inv_Q\theta\pa*{x, y} = - \sum_{\substack{x < z \le y \\ z \in Q}} \theta\pa*{x, z} \cdot \Inv_Q  \theta\pa*{z, y}.
\end{equation}
From definition, we have that $\theta\pa*{x, z} \in \Span_\QQ \set*{t_w / t_x : x \le w \le z}$.
Together with the induction assumption we infer that $\theta\pa*{x, z} \cdot \Inv_Q \theta\pa*{z, y} \in \Span_\QQ \Mcl\pa*{\br*{x, y}_Q}$ for all $x < z \le y$.
Therefore, we get from \eqref{eq:tp:tic:inverse-formula} that $\Inv_Q\theta\pa*{x, y} \in \Span_\QQ \Mcl\pa*{\br*{x, y}_Q}$.

Next, we prove \eqref{eq:tp:theta-inv-coeffs}.
Let $m \in \Mcl\pa*{\br*{x, y}_Q}$ with admissible form $\frac{t_{w_1}}{t_{z_1}}\cdots \frac{t_{w_k}}{t_{z_k}}$.
Set $m' = \frac{t_{w_2}}{t_{z_2}}\cdots \frac{t_{w_k}}{t_{z_k}}$ and $C = \br*{\Inv_Q \theta\pa*{z_2, y}}_{m'}$.
So by the induction's assumption and since $\mu_Q\pa*{z_2, z_2} = 1$ we have that
\begin{equation}
\label{eq:tp:tic:c-formula}
C = \prod_{i=2}^{k} \mu_{Q_i} \pa*{z_i, w_i} \, \Gamma_Q\pa*{w_i, z_{i+1}} .
\end{equation}

We split the proof into two cases:
\begin{case}[When $z_1 = x$]
  We take a look at the sum in \eqref{eq:tp:tic:inverse-formula}, and we identify the $z$ which contributes to the coefficient of $m$.
  Since $m \notin \Mcl\pa*{\br*{x, y}_Q}$, there are two kinds of summands that contains $m$:
  \begin{itemize}
    \item Summands with $z_1 < z < w_1$.
      Here $m$ appears from the multiplication of the monomial $\frac{t_z}{t_{z_1}}$ in $\theta\pa*{x, z}$ and the monomial $\frac{t_{w_1}}{t_z} \cdot m'$ in $\Inv_Q\pa*{z, y}$.

    \item Summands with $w_1 \le z \le z_2$.
      Here $m$ appears from the multiplication of the monomial $\frac{t_{w_1}}{t_{z_1}}$ in $\theta\pa*{x, z}$ and the monomial $m'$ in $\Inv_Q\pa*{z, y}$.
  \end{itemize}
  So from \eqref{eq:tp:tic:inverse-formula} we get that
  \begin{multline*}
    \br*{\Inv_Q\theta\pa*{x, y}}_m
    =
    - \sum_{\substack{z_1 < z < w_1 \\ z \in Q}} \br*{\theta\pa*{z_1, z}}_{t_z/t_{z_1}} \, \br*{\Inv_Q \theta\pa*{z, y}}_{t_{w_1}/t_z \cdot m'} \\
    - \sum_{\substack{w_1 \le z \le z_2 \\ z \in Q}} \br*{\theta\pa*{z_1, z}}_{t_{w_1}/t_{z_1}} \, \br*{\Inv_Q \theta\pa*{z, y}}_{m'}.
  \end{multline*}
  And from the definition of $\theta$ (see \eqref{eq:theta-poly}) we have that
  \begin{equation}
    \label{eq:tp:tic:coeff-expansion}
    \br*{\Inv_Q\theta\pa*{x, y}}_m
    =
    - \sum_{\substack{z_1 < z < w_1 \\ z \in Q}} \br*{\Inv_Q \theta\pa*{z, y}}_{t_{w_1}/t_z \cdot m'} \\
    - \sum_{\substack{w_1 \le z \le z_2 \\ z \in Q}} \mu\pa*{w_1, z} \, \br*{\Inv_Q \theta\pa*{z, y}}_{m'}.
  \end{equation}

  Next, we focus on the term $\br*{\Inv_Q \theta\pa*{z, y}}_{t_{w_1}/t_z \cdot m'}$ when $x < z < w_1$.
  Using the induction assumption gives
  \begin{equation*}
    \br*{\Inv_Q \theta\pa*{z, y}}_{t_{w_1}/t_z \cdot m'}
    = \Inv_Q \mu\pa*{z, z} \, \mu_{Q_1} \pa*{z, w_1} \, \Gamma_Q\pa*{w_1, z_2}  {\prod_{i=2}^k  \mu_{Q_i}\pa*{z_i, w_i} \, \Gamma_Q\pa*{w_1, z_{i+1}}} .
  \end{equation*}
  And by \eqref{eq:tp:tic:c-formula} and $\Inv_Q\mu\pa*{z, z} = 1$ we obtain
  \begin{equation}
    \label{eq:tp:tic:first_terms}
    \br*{\Inv_Q \theta\pa*{z ,y}}_{t_{w_1} / t_z \cdot m'} =  \mu_{Q_1} \pa*{z, w_1} \, \Gamma_Q\pa*{w_1, z_2} \cdot C.
  \end{equation}

  We move forward to the term $\br*{\Inv_Q \theta\pa*{z, y}}_{m'}$ when $w_1 \le z \le z_2$.
  From the induction assumption
  \begin{equation*}
    \br*{\Inv_Q \theta\pa*{z, y}}_{m'}
    = \Inv_Q \mu\pa*{z, z_2} {\prod_{i=2}^{k} \mu_{Q_i}\pa*{w_i, z_i}} \,\Gamma_Q\pa*{w_i, z_{i+1}}  .
  \end{equation*}
  And plugging in \eqref{eq:tp:tic:c-formula} gives
  \begin{equation}
    \label{eq:tp:tic:second_terms}
    \br*{\Inv_Q \theta\pa*{z, y}}_{m'} = \Inv_Q  \mu\pa*{z, z_2} \cdot C.
  \end{equation}

  We plug \eqref{eq:tp:tic:first_terms} and \eqref{eq:tp:tic:second_terms} into \eqref{eq:tp:tic:coeff-expansion} to get
  \begin{equation}
    \label{eq:tp:tic:coeff-expansion-2}
    \br*{\Inv_Q\theta\pa*{x, y}}_m
    = - \sum_{\substack{x < z < w_1 \\ z \in Q}} \mu_{Q_1} \pa*{z, w_1} \,\Gamma_Q\pa*{w_1, z_2} \cdot C
    - \sum_{\substack{w_1 \le z \le z_2 \\ z \in Q}} \mu\pa*{w_1, z} \Inv_Q\mu\pa*{z, z_2} \cdot C.
  \end{equation}

  We look on the second sum of \eqref{eq:tp:tic:coeff-expansion-2}.
  So from \eqref{eq:ia:gamma-def} and since $\mu_{Q_1} \pa*{w_1, w_1} = 1$ we get
  \begin{equation*}
    \sum_{\substack{w_1 \le z \le z_2 \\ z \in Q}} \mu\pa*{w_1, z} \Inv_Q\mu\pa*{z, z_2} \cdot C = \mu_{Q_1}\pa*{w_1, w_1} \Gamma_Q\pa*{w_1, z_2} \cdot C.
  \end{equation*}
  We put this in \eqref{eq:tp:tic:coeff-expansion-2} and get that
  \begin{equation*}
    \br*{\Inv_Q\theta\pa*{x, y}}_m = - \sum_{\substack{z_1 < z \le w_1 \\ z \in Q_1}}  \mu_{Q_1} \pa*{z, w_1} \cdot \Gamma_Q\pa*{w_1, z_2} C
  \end{equation*}
  Plugging \eqref{eq:ia:mobius-rec}, \eqref{eq:tp:tic:c-formula} and $\Inv_Q \mu \pa*{x, z_1} = 1$ into the last equation gives
  \begin{align*}
    \br*{\Inv_Q\theta\pa*{x, y}}_m
    &= \mu_{Q_1}\pa*{z_1, w_1} \, \Gamma_Q\pa*{w_1, z_2} C \\
    &= \Inv_Q \mu \pa*{x, z_1} \prod_{i=1}^k \mu_{Q_i}\pa*{z_i, w_i} \,\Gamma_Q\pa*{w_i, z_{i+1}},
  \end{align*}
  as needed.
\end{case}

\begin{case}[When $x < z_1$]
  Since for all $x < z \le y$, all the monomials of $\theta\pa*{x, z}$ contains $t_x$ except for $1$, from \eqref{eq:tp:tic:inverse-formula} we have
  \begin{equation*}
    \br*{\Inv_Q\theta\pa*{x, y}}_m
    = - \sum_{\substack{x < z \le y \\ z \in Q}} \br*{\theta\pa*{x, z}}_1 \, \br*{\Inv_Q \theta\pa*{z, y}}_m.
  \end{equation*}
  Thus, \eqref{eq:theta-poly} implies
  \begin{equation*}
    \br*{\Inv_Q\theta\pa*{x, y}}_m
    = - \sum_{\substack{x < z \le y \\ z \in Q}} \mu\pa*{x, z} \br*{\Inv_Q \theta\pa*{z, y}}_m.
  \end{equation*}
  Then, we plug in the induction assumption to obtain
  \begin{equation*}
    \br*{\Inv_Q\theta\pa*{x, y}}_m
    = - \sum_{\substack{x < z \le y \\ z \in Q}} \mu\pa*{x, z} \Inv_Q \mu\pa*{z, z_1} \prod_{i=1}^k  \mu_{Q_i}\pa*{z_i, w_i} \, \Gamma_Q\pa*{w_i, z_{i+1}},
  \end{equation*}
  and we finish the induction by using \autoref{ia:inverse}.
\end{case}
\end{proof}

\begin{lem}
  \label{tp:theta-inversion-coeff}
  Let $x < y$ and let $Q, Q' \subseteq P$ complementing the interval ${\br*{x, y}_P}$, and let $m$ be a $\br*{x,y}_Q$-admissible monomial.
  Suppose that the monomial $m^{-1} \cdot t_y / t_x$ is $\br*{x, y}_{Q'}$-admissible.
  Then,
  \begin{equation*}
    \br*{\Inv_Q \theta\pa*{x, y}}_m = - \br*{\Inv_{Q'} \theta\pa*{x, y}}_{m^{-1} \cdot t_y / t_x}.
  \end{equation*}
\end{lem}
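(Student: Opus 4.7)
The plan is to expand both sides of the identity via the explicit coefficient formula from \autoref{tp:theta-inv-coeffs} and then match factors one by one. Write $m$ in its $\br*{x,y}_Q$-admissible form as $\frac{t_{w_1}}{t_{z_1}} \cdots \frac{t_{w_k}}{t_{z_k}}$ with $z_1 \le w_1 \le \dots \le w_k$. The hypothesis that $m^{-1} \cdot t_y/t_x$ is $\br*{x,y}_{Q'}$-admissible forces all the $w_i$ to lie in $Q'$, and the admissible form of $m^{-1} \cdot t_y/t_x$ in $\br*{x,y}_{Q'}$ is
\begin{equation*}
  \frac{t_{z_1}}{t_x} \cdot \frac{t_{z_2}}{t_{w_1}} \cdots \frac{t_{z_k}}{t_{w_{k-1}}} \cdot \frac{t_y}{t_{w_k}}
\end{equation*}
(with the obvious simplifications when $z_1 = x$ or $w_k = y$). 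Applying \autoref{tp:theta-inv-coeffs} to both sides then yields closed-form products of $\mu$, $\Inv \mu$ and $\Gamma$ factors indexed by the subintervals of this chain.

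The key step is to identify corresponding factors via complementarity. For each subinterval $\br*{z_i, w_i}_P$ the sets $Q \cup \set*{w_i}$ and $Q' \cup \set*{z_i}$ complement it, and for each $\br*{w_i, z_{i+1}}_P$ the sets $Q \cup \set*{w_i}$ and $Q' \cup \set*{z_{i+1}}$ complement it, because $(z_i, w_i), (w_i, z_{i+1}) \subseteq (x, y)$ and $Q, Q'$ already complement $\br*{x,y}_P$. Combining \autoref{ia:equivalence-posets-inverse}, \autoref{ia:moebius-completition} and \autoref{ia:gamma-formula} then produces the identifications
\begin{equation*}
  \mu_{Q \cup \set*{w_i}}\pa*{z_i, w_i} = \Gamma_{Q'}\pa*{z_i, w_i}, \qquad \Gamma_Q\pa*{w_i, z_{i+1}} = \mu_{Q' \cup \set*{z_{i+1}}}\pa*{w_i, z_{i+1}},
\end{equation*}
which make every interior factor of the left-hand expansion equal to the corresponding interior factor on the right.

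After these substitutions, only the leading factors differ: $\Inv_Q\mu\pa*{x, z_1}$ on the left versus $\mu_{Q' \cup \set*{z_1}}\pa*{x, z_1}$ on the right. A final application of \autoref{ia:moebius-completition} on the subinterval $\br*{x, z_1}_P$, where $Q$ and $Q' \cup \set*{z_1}$ again complement, supplies the missing sign, giving $\Inv_Q\mu\pa*{x, z_1} = -\mu_{Q' \cup \set*{z_1}}\pa*{x, z_1}$. Combining this with the term-by-term matching of the interior factors yields the claimed equality up to the overall $-1$.

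The main obstacle I expect is the careful bookkeeping in the boundary cases $z_1 = x$ or $w_k = y$, where the admissible form of $m^{-1} \cdot t_y/t_x$ has one fewer fraction than in the generic case, so the source of the overall minus sign shifts from the leading $\Inv_Q\mu\pa*{x,z_1}$ factor to an analogous identity $\mu_{Q \cup \set*{w_1}}\pa*{x, w_1} = -\Inv_{Q'}\mu\pa*{x, w_1}$ (or its right-end analogue). Each such boundary case should be checked by direct substitution, but it adds no new idea beyond another invocation of \autoref{ia:moebius-completition}.
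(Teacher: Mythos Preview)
Your proposal is correct and follows essentially the same strategy as the paper: expand both coefficients via \autoref{tp:theta-inv-coeffs}, then match factors interval-by-interval using \autoref{ia:moebius-completition} and \autoref{ia:gamma-formula}, with the single overall sign arising from exactly one application of \autoref{ia:moebius-completition} at the boundary. The paper carries out the case split $(z_1 = x \text{ or not}) \times (w_k = y \text{ or not})$ explicitly, deriving in each case the analogue of your product identity \eqref{eq:tic:prod-identity}, whereas you describe the generic case first and defer the boundary cases; but the content and the tools invoked are identical.
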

\begin{proof}
  Let $\frac{t_{w_1}}{t_{z_1}} \cdots \frac{t_{w_k}}{t_{z_k}}$ be the admissible form of $m$.
  Set $z_{k+1} = y$, $Q_i = Q \cup \set*{w_i}$ and $Q'_i = Q' \cup\set*{z_i}$.
  So from \autoref{tp:theta-inv-coeffs},
  \begin{equation}
    \label{eq:tic:m-coeff}
    \br*{\Inv_Q \theta\pa*{x, y}}_m
        = \Inv_Q \mu\pa*{x, z_1} \prod_{i=1}^k \mu_{Q_i}\pa*{z_i, w_i} \Gamma_Q \pa*{w_i, z_{i+1}}.
  \end{equation}
  Next, we have that
  \begin{equation*}
    m^{-1} \cdot \frac{t_y}{t_x} = \frac{t_{z_1}}{t_x} \cdot\frac{t_{z_2}}{t_{w_1}} \cdots \frac{t_{z_k}}{t_{w_{k-1}}} \cdot \frac{t_y}{t_{w_k}}.
  \end{equation*}
  Hence, $w_1, \dots, w_{k-1}$ must be in the denominator of $m^{-1} \cdot t_y / t_x$.
  And since $m^{-1} \cdot t_y / t_x$ is $\br*{x, y}_{Q'}$-admissible we get that $w_1, \dots, w_{k-1} \in Q'$.
  Also, $w_k \in Q'$ since either $w_k = y$ or $w_k$ is in the denominator of $m^{-1} \cdot t_y / t_x$.

  Finally, we split the proof into 4 cases.
  In each of those cases $m^{-1} \cdot t_y/t_x$ has a slightly different admissible form.
  \begin{case}[When $x = z_1$ and $y \ne w_k$]
    \label{tic:case-half-close}
    Since $Q$ and $Q'$ are complementing $\br*{x, y}$ and $x \le z_1 < w_i \le w_k < y$ for all $i = 1, \dots ,k$, the elements $w_i\notin Q$.
    Thus, we use \autoref{ia:gamma-formula} to get
    \begin{equation*}
      \prod_{i=1}^k \mu_{Q_i}\pa*{z_i, w_i} \, \Gamma_Q\pa*{w_i, z_{i+1}}
        = \prod_{i=1}^k \mu_{Q_i}\pa*{z_i, w_i} \, \pa*{-\Inv_{Q_i} \mu \pa*{w_i, z_{i+1}}}.
    \end{equation*}
    The subposets $Q_i$ and $Q'_i$ are complementing the interval $\br*{z_i, w_i}$.
    Moreover, $Q_i$ and $Q'_{i+1}$ are complementing the interval $\br*{w_i, z_{i+1}}$.
    Hence, applying \autoref{ia:moebius-completition} on the last equation gives
    \begin{equation*}
      \prod_{i=1}^k \mu_{Q_1}\pa*{z_i, w_i} \, \Gamma_Q\pa*{w_i, z_{i+1}}
      = \prod_{i=1}^k \Inv_{Q'_i}\mu\pa*{z_i, w_i} \, \pa*{-\mu_{Q'_{i+1}} \pa*{w_i, z_{i+1}}}.
    \end{equation*}
  For all $i = 2, \dots ,k$ we have that $x \le z_1 < z_i < w_k < y$, hence $z_i\notin Q'$.
  So we apply \autoref{ia:gamma-formula} again:
  \begin{equation*}
    \prod_{i=1}^k \mu_{Q_1}\pa*{z_i, w_i} \, \Gamma_Q\pa*{w_i, z_{i+1}}
      = - \Inv_{Q'_1}\mu\pa*{z_1, w_1} \mu_{Q'_2}\pa*{w_1, z_2} \prod_{i=2}^k  \Gamma_{Q'}\pa*{z_i, w_i} \, \mu_{Q'_{i+1}} \pa*{w_i, z_{i+1}},
  \end{equation*}
  and form here we get the identity
  \begin{multline}
    \label{eq:tic:prod-identity}
    \prod_{i=1}^k \mu_{Q_1}\pa*{z_i, w_i} \, \Gamma_Q\pa*{w_i, z_{i+1}} \\
      = - \Inv_{Q'}\mu\pa*{x, w_1} \pa*{\prod_{i=1}^{k-1} \mu_{Q'_{i+1}} \pa*{w_i, z_{i+1}} \, \Gamma_{Q'}\pa*{z_{i+1}, w_{i+1}}} \mu_{Q'_{k+1}} \pa*{w_k, z_{k+1}}.
  \end{multline}

    Next, we take a look on the admissible form of $m^{-1} \cdot t_y / t_x$ which is
    \begin{equation*}
      m^{-1} \cdot \frac{t_y}{t_x} = \frac{t_{z_2}}{t_{w_1}} \cdots \frac{t_{z_k}}{t_{w_{k-1}}}\cdot \frac{t_y}{t_{w_k}}.
    \end{equation*}
    From \autoref{tp:theta-inv-coeffs},
    \begin{equation*}
      \br*{\Inv_{Q'} \theta\pa*{x, y}}_{m^{-1} \cdot t_y / t_x}
        = \Inv_{Q'} \mu\pa*{x, w_1} \prod_{i=1}^k \mu_{Q'_{i+1}} \pa*{w_i, z_{i+1}} \,\Gamma_{Q'}\pa*{z_{i+1}, w_{i+1}},
    \end{equation*}
    where $w_{k+1} = y$.
    And, by using \eqref{eq:tic:prod-identity}, then the identities $\Inv_Q\mu\pa*{x, z_1} = \Gamma_{Q'}\pa*{w_{k+1}, z_{k+1}} = 1$, and then \eqref{eq:tic:m-coeff} to obtain
    \begin{align*}
      \br*{\Inv_{Q'} \theta\pa*{x, y}}_{m^{-1} \cdot t_y / t_x}
        &= - \pa*{\prod_{i=1}^k \mu_{Q_i}\pa*{z_i, w_i} \, \Gamma_Q\pa*{w_i, z_{i+1}}} \Gamma_{Q'}\pa*{z_{k+1}, w_{k+1}} \\
        &= - \Inv_Q\mu\pa*{x, z_1} \prod_{i=1}^k \mu_{Q_i}\pa*{z_i, w_i} \, \Gamma_Q\pa*{w_i, z_{i+1}} \\
        &= - \br*{\Inv_Q \theta\pa*{x, y}}_m,
    \end{align*}
    as needed.
  \end{case}

  \begin{case}[When $x = z_1$ and $y = w_k$]
    \label{tic:case-fully-close}
    Similarly to proving \eqref{eq:tic:prod-identity}, we use \autoref{ia:gamma-formula}, then \autoref{ia:moebius-completition} and then \autoref{ia:gamma-formula} to get the identity
    \begin{multline}
      \label{eq:tic:prod-identity-2}
      \pa*{\prod_{i=1}^{k-1} \mu_{Q_i}\pa*{z_i, w_i} \, \Gamma_Q\pa*{w_i, z_{i+1}}} \mu_{Q_k}\pa*{z_k, w_k} \\
        = - \Inv_{Q'}\mu\pa*{x, w_1} \prod_{i=1}^{k-1} \mu_{Q'_{i+1}} \pa*{w_i, z_{i+1}} \, \Gamma_{Q'}\pa*{z_{i+1}, w_{i+1}}.
    \end{multline}
    Next, we focus on the admissible form of $m^{-1} \cdot t_y / t_x$ is
    \begin{equation*}
      m^{-1} \cdot \frac{t_y}{t_x} = \frac{t_{z_2}}{t_{w_1}} \cdots \frac{t_{z_k}}{t_{w_{k-1}}}.
    \end{equation*}
   Using \autoref{tp:theta-inv-coeffs} gives
    \begin{equation*}
      \br*{\Inv_{Q'} \theta\pa*{x, y}}_{m^{-1} \cdot t_y / t_x}
        = \Inv_{Q'} \mu\pa*{x, w_1} \prod_{i=1}^{k-1} \mu_{Q'_{i+1}} \pa*{w_i, z_{i+1}} \Gamma_{Q'}\pa*{z_{i+1}, w_{i+1}}.
    \end{equation*}
    Then, we use \eqref{eq:tic:prod-identity-2}, then the identities $\Inv_Q\mu\pa*{x, z_1} = \Gamma_Q\pa*{w_k, z_{k+1}} = 1$, and then \eqref{eq:tic:m-coeff} to obtain
    \begin{align*}
      \br*{\Inv_{Q'} \theta\pa*{x, y}}_{m^{-1} \cdot t_y / t_x}
        &= - \pa*{\prod_{i=1}^{k-1} \mu_{Q_i}\pa*{z_i, w_i} \, \Gamma_Q\pa*{w_i, z_{i+1}}} \mu_{Q_k} \pa*{z_k, w_k} \\
        &= - \Inv_Q\mu\pa*{x, z_1} \prod_{i=1}^k \mu_{Q_i}\pa*{z_i, w_i} \, \Gamma_Q\pa*{w_i, z_{i+1}} \\
        &= - \br*{\Inv_Q \theta\pa*{x, y}}_m,
    \end{align*}
    as needed
  \end{case}
  \begin{case}[When $x \ne z_1$]
    In this case, $t_x$ must be in the denominator of $m^{-1} \cdot t_y / t_x$.
    Therefore, by replacing $m$ with $m^{-1} \cdot t_y / t_x$ and swapping $Q$ and $Q'$, we get one of the other cases. \qedhere
  \end{case}
\end{proof}

\begin{lem}
\label{tp:theta-inversion}
Let $x < y$ and let $Q, Q' \subseteq P$ complementing the interval ${\br*{x, y}_P}$.
Set $\underline{t}^{-1} = \pa*{t_z^{-1}}_{z \in P}$.
Then,
\begin{equation*}
\Inv_Q \theta\pa*{x, y ; \underline{t}} = - \frac{t_y}{t_x} \Inv_{Q'} \theta\pa*{x, y ; \underline{t}^{-1}}.
\end{equation*}
\end{lem}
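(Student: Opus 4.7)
My plan is to expand both sides of the claimed identity as linear combinations of Laurent monomials in the $t_z$ and match coefficients one by one using \autoref{tp:theta-inversion-coeff}. By \autoref{tp:theta-inv-coeffs}, $\Inv_Q \theta\pa*{x,y;\underline t}$ is supported on the $\br*{x,y}_Q$-admissible monomials, and similarly $\Inv_{Q'}\theta\pa*{x,y;\underline t}$ is supported on the $\br*{x,y}_{Q'}$-admissible monomials. Substituting $t_z \mapsto t_z^{-1}$ inverts each monomial, so the expansion of $-\frac{t_y}{t_x}\Inv_{Q'}\theta\pa*{x,y;\underline t^{-1}}$ is a linear combination of monomials of the form $\frac{t_y}{t_x}\pa*{m'}^{-1}$ with $m'$ ranging over $\br*{x,y}_{Q'}$-admissible monomials.

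Since the assignment $m \mapsto \frac{t_y}{t_x}m^{-1}$ is an involution on Laurent monomials, reparametrising the right-hand side by it reduces the statement to the pointwise identity
\begin{equation*}
  \br*{\Inv_Q\theta\pa*{x,y}}_m = -\br*{\Inv_{Q'}\theta\pa*{x,y}}_{t_y m^{-1}/t_x}
\end{equation*}
for every Laurent monomial $m$. When $m$ is $\br*{x,y}_Q$-admissible and $\frac{t_y}{t_x}m^{-1}$ is $\br*{x,y}_{Q'}$-admissible simultaneously, this is exactly the conclusion of \autoref{tp:theta-inversion-coeff} and no further work is required.

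The main obstacle will be the remaining case: I need to show that if $m$ is $\br*{x,y}_Q$-admissible but the image $\frac{t_y}{t_x}m^{-1}$ fails to be $\br*{x,y}_{Q'}$-admissible, then the coefficient $\br*{\Inv_Q\theta\pa*{x,y}}_m$ vanishes (the reverse failure is covered by swapping the roles of $Q$ and $Q'$). Writing the admissible form of $m$ as $\prod_{i=1}^k t_{w_i}/t_{z_i}$, the image takes the shape $\frac{t_{z_1}}{t_x}\cdot\frac{t_{z_2}}{t_{w_1}}\cdots\frac{t_y}{t_{w_k}}$, whose $Q'$-admissibility asks for each $w_i \ne y$ to lie in $Q'$. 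Because $Q$ and $Q'$ complement $\br*{x,y}_P$ and any such $w_i$ is strictly interior, failure pins down some $w_i \in Q \setminus Q'$ with $w_i \ne y$; the reduced form of $m$ then forces $w_i \ne z_{i+1}$, so \autoref{ia:gamma-formula} yields $\Gamma_Q\pa*{w_i, z_{i+1}} = 0$, and plugging this into the explicit formula of \autoref{tp:theta-inv-coeffs} kills the coefficient, finishing the proof.
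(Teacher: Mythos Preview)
Your proposal is correct and follows essentially the same approach as the paper's proof: reduce to the coefficient identity $\br*{\Inv_Q\theta\pa*{x,y}}_m = -\br*{\Inv_{Q'}\theta\pa*{x,y}}_{m^{-1} t_y/t_x}$, invoke \autoref{tp:theta-inversion-coeff} when both sides are admissible, and use \autoref{ia:gamma-formula} to kill the coefficient when some $w_i\in Q\setminus Q'$ appears in the admissible form. The paper organises this into four explicit cases, while you present the same logic more compactly; the only minor imprecision is that for $i=k$ the inequality $w_k\ne z_{k+1}=y$ comes from your hypothesis $w_i\ne y$ rather than from the reduced form of $m$, but you have already recorded that hypothesis, so the argument goes through.
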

\begin{proof}
Let $m$ be some monomial in $A_P$. Its suffice to show that
\begin{equation}
\label{eq:tp:ti:need-to-proof}
\br*{\Inv_Q \theta\pa*{x, y ; \underline{t}}}_m = - \br*{\Inv_{Q'} \theta\pa*{x, y ; \underline{t}}}_{m^{-1} \cdot t_y / t_x}.
\end{equation}
We divide the proof into four cases:

\begin{case}[When $m$ is $\br*{x, y}_Q$-admissible with admissible form $\frac{t_{w_1}}{t_{z_1}} \cdots \frac{t_{w_k}}{t_{z_k}}$ and there exists $i$ such that $w_i \notin Q'$]
\label{tp:ti:case1}
In this case both sides of \eqref{eq:tp:ti:need-to-proof} are zero.
Indeed, $w_i \ne x$ and $w_i \ne y$ because $x, y \in Q'$.
Hence, $m^{-1} \cdot t_y / t_x$ must contain $t_{w_i}$ in its denominator and consequently $m^{-1} \cdot t_y / t_x$ is not $\br*{x, y}_{Q'}$-admissible.
Thus, by \autoref{tp:theta-inv-coeffs}, the right side of \eqref{eq:tp:ti:need-to-proof} is zero.
For the left side of \eqref{eq:tp:ti:need-to-proof},
by \autoref{ia:gamma-formula} we have $\Gamma_Q\pa*{w_i, z_{i+1}} = 0$,
then by using \autoref{tp:theta-inv-coeffs} we infer that the left side is also zero.
\end{case}

\begin{case}[When $m$ is $\br*{x, y}_Q$-admissible with admissible form $\frac{t_{z_1}}{t_{w_1}} \cdots \frac{t_{z_k}}{t_{w_k}}$ and for all $i$ we have $w_i \in Q'$]
\label{tp:ti:case2}
In this case we have that
\begin{equation*}
m^{-1} \cdot \frac{t_y}{t_x} = \frac{t_{w_1}}{t_x} \cdot \frac{t_{w_2}}{t_{z_1}} \cdots \frac{t_{w_k}}{t_{z_{k-1}}} \cdot \frac{t_y}{t_{z_k}},
\end{equation*}
and also
\begin{equation*}
x \le x \le w_1 \le z_1 \le w_2 \le  \dots \le z_k \le y.
\end{equation*}
Hence, $m^{-1} \cdot t_y / t_x$ is $\br*{x, y}_{Q'}$-admissible, and \autoref{tp:theta-inversion-coeff} implies \eqref{eq:tp:ti:need-to-proof}.
\end{case}

\begin{case}[When $m$ is not $\br*{x, y}_Q$-admissible and $m^{-1} \cdot t_y / t_x$ is not $\br*{x, y}_{Q'}$-admissible]
Here by \autoref{tp:theta-inv-coeffs} both sides of \eqref{eq:tp:ti:need-to-proof} are zeros and this equation holds.
\end{case}

\begin{case}[When $m$ is not $\br*{x, y}_Q$-admissible and $m^{-1} \cdot t_y / t_x$ is $\br*{x, y}_{Q'}$-admissible]
This case is either \autoref{tp:ti:case1} or \autoref{tp:ti:case2}, after replacing $m$ with $m^{-1} \cdot t_y / t_x$ and swapping $Q$ and $Q'$ in \eqref{eq:tp:ti:need-to-proof}.
\qedhere
\end{case}
\end{proof}

\subsection{The natural numbers poset}
We take a look on the poset $\NN$ with the partial order "$\mid$" and on the incidence algebra $I_{A_\NN}\pa*{\NN}$.
In this incidence algebra we have the M\"obius function and the $\theta$ polynomial.
We note that the known M\"obius function from number theory is connected the M\"obius function of incidence algebra over $\NN$ by the following relation:
\begin{equation*}
\mu\pa*{d, n} = \begin{cases}
\mu\pa*{\frac{n}{d}}, &d\mid n, \\
0, & \text{otherwise}.
\end{cases}
\end{equation*}

In this subsection we will always set $t_n = u^n$ where $u$ is some variable or expression.
Therefore, in this section we abbreviate and write $u$ instead of $\pa*{u^n}_{n\in\NN}$ on the third argument of elements of $I_{A_\NN}\pa*{\NN}$, i.e.,
\begin{equation*}
\iae\pa*{d, n; u} = \iae\pa*{d, n; \pa*{u^n}_{n\in\NN}} \quad \text{for all }\iae \in I_{A_\NN}\pa*{\NN}.
\end{equation*}
In particular, the $\theta$ polynomial is
\begin{equation}
\label{eq:theta-function-u}
  \theta\pa*{d, n; u} = \sum_{d \mid e \mid n} \mu\pa*{\frac{n}{e}} u^{e - d}.
\end{equation}

\begin{lem}
\label{d-formula}
For any positive integer $n \ge 1$,
\begin{equation}
\label{eq:d-function}
\dfunc_n\pa*{u, v} = - \frac{v^{-n+1}}{\prod_{\substack{e \mid n \\ e \ne n}}\pa*{r_e - 1}} \sum_{1, n \in Q \subseteq \br*{1, n}_\NN} \pa*{-1}^{\# Q} \,\Inv_Q \theta\pa*{1, n ; u} \,{\prod_{\substack{e\mid n \\ e \notin Q}} r_e},
\end{equation}
where $r_e = u^{n - e} v^{- n / e + 1}$.
\end{lem}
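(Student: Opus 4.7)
The plan is to proceed by induction on $n$. The base case $n = 1$ is immediate from evaluating both sides directly: there is only $Q = \set*{1}$, $\Inv_Q\theta\pa*{1,1;u} = 1$, and the empty products give $\dfunc_1\pa*{u,v} = 1$. For the inductive step, I would manipulate the right-hand side of \eqref{eq:d-function} into a form that matches the recursive definition \eqref{eq:d-function-rec} of $\dfunc_n$; the essential intermediate is a chain-based formula derived by swapping the order of two summations.

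Concretely, by \autoref{ia:chain-inverse} each $\Inv_Q\theta\pa*{1,n;u}$ is a signed sum over proper chains in $Q$ from $1$ to $n$. Swapping the order of summation in \eqref{eq:d-function}, each proper chain $c = \pa*{z_0, \dots, z_k}$ of divisors of $n$ is weighted by an inner sum over all $Q$ with $c \subseteq Q \subseteq \br*{1, n}_\NN$, which collapses via the elementary identity
\begin{equation*}
\sum_{T \subseteq S} \pa*{-1}^{\abs*{T}} \prod_{e \in S \setminus T} r_e = \prod_{e \in S} \pa*{r_e - 1}
\end{equation*}
(applied with $S = \br*{1,n}_\NN \setminus c$) into $\pa*{-1}^{k+1} \prod_{e \mid n,\, e \notin c}\pa*{r_e - 1}$. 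This cancels almost all of the denominator $\prod_{e \mid n,\, e \ne n}\pa*{r_e - 1}$, leaving the factors $\pa*{r_1 - 1}\prod_{i=1}^{k-1}\pa*{r_{z_i} - 1}$; combining with $v^{-n+1}/\pa*{r_1 - 1} = 1/\pa*{u^{n-1} - v^{n-1}}$ rewrites the right-hand side of \eqref{eq:d-function} as
\begin{equation}
\label{eq:plan-chain-formula}
\frac{1}{u^{n-1} - v^{n-1}} \sum_{\pa*{z_0, \dots, z_k} \in \Ccl_\NN^\ast\pa*{1, n}} \; \prod_{i=0}^{k-1} \theta\pa*{z_i, z_{i+1}; u} \; \prod_{i=1}^{k-1} \frac{1}{r_{z_i} - 1}.
\end{equation}

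The final step is to match \eqref{eq:plan-chain-formula} with the recursion \eqref{eq:d-function-rec} by classifying chains by their first step $d := z_1$. The key ingredients are two scaling identities: $\theta\pa*{z, z'; u} = \theta\pa*{z/d, z'/d; u^d}$ whenever $d \mid z \mid z'$ (a direct computation from \eqref{eq:theta-function-u}), and $r_{dw} = \pa*{u^d}^{n/d - w} v^{-\pa*{n/d}/w + 1}$, which is exactly the ``$r$-variable at $w$'' for the integer $n/d$ with base $u$ replaced by $u^d$. Under the poset isomorphism $e \mapsto e/d$ from $\br*{d, n}_\NN$ to $\br*{1, n/d}_\NN$, the tail sum coming from chains with first step $d$ becomes, by the inductive hypothesis applied to $n/d$, equal to $\pa*{u^{n-d} - v^{n/d-1}}\dfunc_{n/d}\pa*{u^d, v}$. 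A short computation with $r_d = u^{n-d} v^{-n/d + 1}$ yields $\pa*{u^{n-d} - v^{n/d-1}}/\pa*{r_d - 1} = v^{n/d-1}$, so summing over $d \ne 1$ recovers exactly the recursion \eqref{eq:d-function-rec}. The main bookkeeping obstacle is the cancellation step for the $r_e$-factors, but the chain expansion organizes it cleanly.
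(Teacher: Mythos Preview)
Your approach is essentially identical to the paper's: expand each $\Inv_Q\theta$ via \autoref{ia:chain-inverse}, swap the summation order, collapse the inner sum over $Q$ using the product identity to obtain a chain formula, and then match this against the recursion \eqref{eq:d-function-rec} by grouping chains according to their first step and invoking the inductive hypothesis on $n/d$. The only minor caveat is that your intermediate chain formula \eqref{eq:plan-chain-formula} is ill-defined at $n/d=1$ (the prefactor becomes $1/0$), so the case $d=n$ needs a one-line separate check; the paper sidesteps this by writing the chain formula in the equivalent form $v^{-n+1}\sum\prod_{i=0}^{k-1}\theta(d_i,d_{i+1};u)/(r_{d_i}-1)$, which degenerates cleanly to $1$ when $n=1$.
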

\begin{proof}
We start with simplifying the right side of \eqref{eq:d-function} which we denote with $\tilde \dfunc_n\pa*{u,v}$.
We use \autoref{ia:chain-inverse} on the right side of \eqref{eq:d-function},
\begin{equation*}
\tilde \dfunc_n\pa*{u, v} = - \frac{v^{-n+1}}{\prod_{\substack{e \mid n \\ e \ne n}}\pa*{r_e - 1}} \sum_{1, n \in Q \subseteq \br*{1, n}_\NN} \pa*{-1}^{\# Q} \, \sum_{\pa*{d_0, \dots, d_k} \in \Ccl^\ast_Q\pa*{1, n}}\pa*{-1}^k \pa*{\prod_{i=0}^{k-1}\theta\pa*{d_i, d_{i+1}; u}} \prod_{\substack{e\mid n \\ e \notin Q}} r_e \; .
\end{equation*}
By changing the order of summation we get that
\begin{equation}
\label{eq:dr:first-d}
\tilde \dfunc_n\pa*{u, v} = - \frac{v^{-n+1}}{\prod_{\substack{e \mid n \\ e \ne n}}\pa*{r_e - 1}} \sum_{\pa*{d_0, \dots, d_k} \in \Ccl^\ast_\NN\pa*{1, n}} \pa*{\prod_{i=0}^{k-1}\theta\pa*{d_i, d_{i+1}; u}} \sum_{\substack{Q \subseteq \br*{1, n}_\NN \\ d_0,\dots,d_k \in Q}} \pa*{-1}^{\# Q + k} \; \prod_{\substack{e\mid n \\ e \notin Q}} r_e.
\end{equation}

We focus on the inner sum of \eqref{eq:dr:first-d}.
By changing the variable $Q$ to $Q \setminus \set*{d_0, \dots, d_k}$, we get
\begin{equation}
  \label{eq:dr:inner-sum-intermid}
  \sum_{\substack{Q \subseteq \br*{1, n}_\NN \\ d_0,\dots,d_k \in Q}} \pa*{-1}^{\# Q + k} \; \prod_{\substack{e\mid n \\ e \notin Q}} r_e = \sum_{\substack{Q \subseteq \br*{1, n}_\NN \\ d_0,\dots,d_k \notin Q}} \pa*{-1}^{\#Q + 1} \prod_{\substack{e \mid n \\ e \notin Q \sqcup \set*{d_0, \dots, d_k}}} r_e.
\end{equation}
For a finite sequence $\pa*{a_i}_{i \in F}$ we have the identity
\begin{equation*}
\prod_{i \in F} \pa*{a_i - 1} = \sum_{F' \subseteq F} \pa*{-1}^{\# F'} \prod_{i \in F \setminus F'} a_i.
\end{equation*}
Using this identity on the right side of \eqref{eq:dr:inner-sum-intermid} gives
\begin{equation}
\label{eq:dr:inner-sum}
\sum_{\substack{Q \subseteq \br*{1, n}_\NN \\ d_0,\dots,d_k \in Q}} \pa*{-1}^{\# Q + k} \; \prod_{\substack{e\mid n \\ e \notin Q}} r_e
  = - \prod_{\substack{e \mid n \\ e \notin \set*{d_0, \dots, d_k}}} \pa*{r_e - 1}.
\end{equation}

We put \eqref{eq:dr:inner-sum} in \eqref{eq:dr:first-d} to obtain
\begin{equation*}
  \tilde \dfunc_n\pa*{u, v}
  = \frac{v^{-n+1}}{\prod_{\substack{e \mid n \\ e \ne n}}\pa*{r_e - 1}} \sum_{\pa*{d_0, \dots, d_k} \in \Ccl^\ast_\NN\pa*{1, n}} \pa*{\prod_{i=0}^{k-1}\theta\pa*{d_i, d_{i+1}; u}} \prod_{\substack{e \mid n \\ e \notin \set*{d_0, \dots, d_k}}} \pa*{r_e - 1}.
\end{equation*}
We insert the denominator inside the sum and get
\begin{equation}
\label{eq:dr:d-function-simple-form-with-r}
  \tilde \dfunc_n\pa*{u, v}
  = v^{-n+1} \sum_{\pa*{d_0, \dots, d_k} \in \Ccl^\ast_\NN\pa*{1, n}} \; \prod_{i=0}^{k-1} \frac{\theta\pa*{d_i, d_{i+1}; u}}{r_{d_i} - 1}.
\end{equation}
Plugging
\begin{equation*}
  v^{-n + 1} = \prod_{i=0}^{k-1} \frac{v^{n/d_{i+1}}}{v^{n/d_i}},
\end{equation*}
into \eqref{eq:dr:d-function-simple-form-with-r} gives
\begin{equation}
\label{eq:dr:d-function-simple-form}
\tilde \dfunc_n\pa*{u, v}
  = \sum_{\pa*{d_0, \dots, d_k} \in \Ccl^\ast_\NN\pa*{1, n}} \; \prod_{i=0}^{k-1} \frac{v^{n/d_{i+1}} \, \theta\pa*{d_i, d_{i+1}; u}}{u^{n - d_i}v  - v^{n / d_i}}.
\end{equation}

Next, we prove that $\dfunc_n\pa*{u, v} = \tilde\dfunc_n\pa*{u, v}$ with induction on $n$.
For $n=1$, the equality is clear.
For $n > 1$, the induction assumption and \eqref{eq:dr:d-function-simple-form} gives that for any $1 \ne d \mid n$,
\begin{equation*}
\dfunc_{n/d}\pa*{u^d, v}
  = \sum_{\pa*{d_0, \dots, d_k} \in \Ccl^\ast_\NN\pa*{1, n/d}} \; \prod_{i=0}^{k-1} \frac{v^{n/dd_{i + 1}} \, \theta\pa*{d_i, d_{i+1}; u^d}}{u^{n - dd_i}v  - v^{n / dd_i}}.
\end{equation*}
Using \eqref{eq:theta-function-u} we have $\theta\pa*{d_i, d_{i+1}; u^d} = \theta\pa*{dd_i, dd_{i+1}; u}$.
Hence,
\begin{equation}
\label{eq:dr:d-function-rec-1}
\dfunc_{n/d}\pa*{u^d, v}
  = \sum_{\pa*{d_0, \dots, d_k} \in \Ccl^\ast_\NN\pa*{1, n/d}} \; \prod_{i=0}^{k-1} \frac{v^{n/dd_{i+1}} \, \theta\pa*{dd_i, dd_{i+1}; u}}{u^{n - dd_i}v  - v^{n / dd_i}}.
\end{equation}

We take a look on the map
\begin{equation*}
\pa*{d_0,\dots, d_k} \mapsto \pa*{d d_0,\dots, d d_k}.
\end{equation*}
This map is a bijection from $\Ccl^\ast_\NN\pa*{1, n / d}$ to $\Ccl^\ast_\NN\pa*{d, n}$.
Therefore, we can change the variables of the sum in \eqref{eq:dr:d-function-rec-1} to get
\begin{equation}
\label{eq:dr:d-function-rec-2}
\dfunc_{n/d}\pa*{u^d, v}
  = \sum_{\pa*{d_0, \dots, d_k} \in \Ccl^\ast_\NN\pa*{d, n}} \; \prod_{i=0}^{k-1} \frac{v^{n/d_{i+1}} \, \theta\pa*{d_i, d_{i+1}; u}}{u^{n - d_i}v  - v^{n / d_i}}.
\end{equation}

Finally, we plug \eqref{eq:theta-function-u} and \eqref{eq:dr:d-function-rec-2} into \eqref{eq:d-function-rec}:
\begin{align*}
\dfunc_n\pa*{u, v}
 &= \frac{1}{u^{n-1} - v^{n-1}} \sum_{1\ne d \mid n} v^{n/d - 1} \; \theta\pa*{1,d; u} \sum_{\pa*{d_0, \dots, d_k} \in \Ccl^\ast_\NN\pa*{d, n}} \; \prod_{i=0}^{k-1} \frac{v^{n/d_{i+1}} \, \theta\pa*{d_i, d_{i+1}; u}}{u^{n - d_i}v  - v^{n / d_i}} \\
 &= \sum_{1\ne d \mid n} \; \sum_{\pa*{d_0, \dots, d_k} \in \Ccl^\ast_\NN\pa*{d, n}} \frac{v^{n/d} \, \theta\pa*{1, d; u}}{u^{n-1}v - v^n} \; \prod_{i=0}^{k-1} \frac{v^{n/d_{i+1}}\,\theta\pa*{d_i, d_{i+1}; u}}{u^{n - d_i}v  - v^{n / d_i}}.
\end{align*}
The two sums in the last line are running over all the proper chains from $1$ to $n$, when $d$ is chosen to be the first element in the chain after $1$.
Hence,
\begin{equation*}
\dfunc_n\pa*{u, v}
 = \sum_{\pa*{d_0, \dots, d_k} \in \Ccl^\ast_\NN\pa*{1, n}} \; \prod_{i=0}^{k-1} \frac{v^{n/d_{i+1}}\,\theta\pa*{d_i, d_{i+1}; u}}{u^{n - d_i}v  - v^{n / d_i}}.
\end{equation*}
And using \eqref{eq:dr:d-function-simple-form} finishes the proof.
\end{proof}

\begin{proof}[Proof of \autoref{d-function-inverse}]
When $n=1$ this proposition is clear, so we assume that $n > 1$.
From \autoref{d-formula} we have that
\begin{equation}
\label{eq:dfi:d-inverse}
\dfunc_n\pa*{u^{-1}, v^{-1}}
  =  - \frac{v^{n-1}}{\prod_{\substack{e \mid n \\ e \ne n}}\pa*{r_e^{-1} - 1}} \sum_{1, n \in Q \subseteq \br*{1, n}_\NN} \pa*{-1}^{\# Q} \,\Inv_Q \theta\pa*{1, n ; u^{-1}} \,{\prod_{\substack{e\mid n \\ e \notin Q}} r_e^{-1}},
\end{equation}
where $r_e = u^{n-e} v^{-n / e + 1}$.
First we focus on the fraction on the right side \eqref{eq:dfi:d-inverse}:
\begin{equation*}
\frac{v^{n-1}}{\prod_{\substack{e \mid n \\ e \ne n}}\pa*{r_e^{-1} - 1}}
  = \frac{v^{n - 1} r_n}{\prod_{\substack{e \mid n \\ e \ne n}}\pa*{r_e^{-1} - 1}}
  = \frac{v^{n - 1} \prod_{e \mid n} r_e}{\prod_{\substack{e \mid n \\ e \ne n}} \pa*{1 - r_e}}.
\end{equation*}
Thus,
\begin{equation*}
\frac{v^{n-1}}{\prod_{\substack{e \mid n \\ e \ne n}}\pa*{r_e^{-1} - 1}}
= \pa*{-1}^{\#\br*{1, n}_\NN + 1} \, v^{n - 1} \, \frac{\prod_{e \mid n} r_e}{\prod_{\substack{e \mid n \\ e \ne n}} \pa*{r_e - 1}}.
\end{equation*}
We put the last equation in \eqref{eq:dfi:d-inverse},
\begin{equation}
  \label{eq:dfi:d-inverse-2}
  \dfunc_n\pa*{u^{-1}, v^{-1}}
  =  \frac{v^{n-1}}{\prod_{\substack{e \mid n \\ e \ne n}}\pa*{r_e - 1}} \sum_{1, n \in Q \subseteq \br*{1, n}_\NN} \pa*{-1}^{\#\br*{1, n}_\NN + \# Q} \,\Inv_Q \theta\pa*{1, n ; u^{-1}} \,{\prod_{e \in Q} r_e}.
\end{equation}

Next, we focus on the sum of \eqref{eq:dfi:d-inverse-2}.
For any $1, n \in Q \subseteq \br*{1, n}_\NN$, set $\tilde Q = \pa*{\br*{1, n}_\NN \setminus Q} \cup \set*{1, n}$.
Since $Q = \pa*{\br*{1, n}_\NN \setminus \tilde Q} \cup \set*{1, n}$,
\begin{equation*}
{\prod_{e \in Q} r_e} = r_1 r_n {\prod_{\substack{e \mid n \\ e \notin \tilde Q}} r_e},
\end{equation*}
and then
\begin{equation}
  \label{eq:dfi:r-prod-comp}
  {\prod_{e \in Q} r_e} = u^{n-1} v^{-n + 1} {\prod_{\substack{e \mid n \\ e \notin \tilde Q}} r_e}.
\end{equation}
Moreover, we have that $\#\tilde Q = \#\br*{1,n}_\NN - \# Q + 2$.
Putting that and \eqref{eq:dfi:r-prod-comp} in \eqref{eq:dfi:d-inverse-2} gives
\begin{equation}
  \label{eq:dfi:d-inverse-3}
  \dfunc_n\pa*{u^{-1}, v^{-1}}
  =  \frac{u^{n+1}}{\prod_{\substack{e \mid n \\ e \ne n}}\pa*{r_e - 1}} \sum_{1, n \in Q \subseteq \br*{1, n}_\NN} \pa*{-1}^{\#\tilde Q} \,\Inv_Q \theta\pa*{1, n ; u^{-1}} \,\prod_{\substack{e \mid n \\ e \notin \tilde Q}} r_e.
\end{equation}

The posets $Q$ and $\tilde Q$ are complementing the interval $\br*{1, n}_\NN$, so applying \autoref{tp:theta-inversion} on \eqref{eq:dfi:d-inverse-3} gives
\begin{equation*}
  \dfunc_n\pa*{u^{-1}, v^{-1}}
  = -\frac{1}{\prod_{\substack{e \mid n \\ e \ne n}}\pa*{r_e - 1}} \sum_{1, n \in Q \subseteq \br*{1, n}_\NN} \pa*{-1}^{\#\tilde Q} \,\Inv_{\tilde Q} \theta\pa*{1, n ; u} \,\prod_{\substack{e \mid n \\ e \notin \tilde Q}} r_e.
\end{equation*}
We change the variable of the sum to obtain
\begin{equation}
  \label{eq:dfi:d-inverse-4}
  \dfunc_n\pa*{u^{-1}, v^{-1}}
  = -\frac{1}{\prod_{\substack{e \mid n \\ e \ne n}}\pa*{r_e - 1}} \sum_{1, n \in \tilde Q \subseteq \br*{1, n}_\NN} \pa*{-1}^{\#\tilde Q} \,\Inv_{\tilde Q} \theta\pa*{1, n ; u} \,\prod_{\substack{e \mid n \\ e \notin \tilde Q}} r_e.
\end{equation}

Finally, we use \autoref{d-formula} in \eqref{eq:dfi:d-inverse-4} and get
\begin{equation*}
\dfunc_n\pa*{u^{-1}, v^{-1}}
  = v^{n-1} \dfunc_n\pa*{u, v}. \qedhere
\end{equation*}
\end{proof}


\section{The roots of \texorpdfstring{$f_n$}{h\_n}}
Let $K$ be a $p$-adic field of degree $n$ over $\QQ_p$.
For an open set $U \subseteq K$ we define $\newroots{U}$ to be the number of roots of $f_n$ in the set $U$ which also generates $K$ i.e.\
\begin{equation*}
  \newroots{U} = \#\set*{x \in U : f_n\pa*{x} = 0 \text{ and } K = \QQ_p\br*{x}}.
\end{equation*}
If $U = K$ we abbreviate and write $\newroots* = \newroots{K}$.

In this section we calculate the expected value of $\newroots{U}$ for three sets $U = K, \Ocl_K, \maxideal$ in the case that $K / \QQ_p$ is unramified.
More precisely,
\begin{prop}
\label{hn-roots}
Let $K$ be an unramified field extension of $\QQ_p$ of degree $n$.
Then
\begin{align}
\label{eq:int-root-num}
\frac{1}{n} \Ex\br*{\newroots{\Ocl_K}} &= \frac{p^{n+1} - p^n}{p^{n+1} - 1} \cdot \dfunc*_n\pa*{p}, \\
\label{eq:nonumit-root-num}
\frac{1}{n} \Ex\br*{\newroots{\maxideal}} &= \frac{p - 1}{p^{n+1} - 1} \cdot \dfunc*_n \pa*{p^{-1}}, \qquad\text{and} \\
\label{eq:total-root-num}
\frac{1}{n} \Ex\br*{\newroots*} &= \frac{p - 1}{p^{n+1} - 1} \pa*{p^n \dfunc*_n\pa*{p} + \dfunc*_n \pa*{p^{-1}}}.
\end{align}
\end{prop}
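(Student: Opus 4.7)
The plan is to exploit two features of the unramified setting. First, since $K/\QQ_p$ is Galois, any root $x \in K$ of $f_n$ that generates $K$ forces $f_n = c \cdot m_x$ for a unique $c \in \QQ_p^\times$; the set of roots of $f_n$ in $K$ is then the entire Galois orbit of $x$, and all of its members share the same absolute value $\pav[K]{x}$. Consequently $\newroots{\Ocl_K}$ and $\newroots{\maxideal}$ take only the values $0$ and $n$, and
\begin{equation*}
  \Ex\br*{\newroots{U}} = n \cdot \Pr\br*{\text{some }\QQ_p\text{-generating orbit of } K \text{ lies inside } U}.
\end{equation*}

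To compute this probability I pull back the Haar measure on $\ZZ_p^{n+1}$ along the $n$-to-$1$ parametrization
\begin{equation*}
  F\colon \pa*{c, x} \in \QQ_p \times K \longmapsto \rv = \pa*{c a_0\pa*{x}, \dots, c a_{n-1}\pa*{x}, c},
\end{equation*}
where $m_x\pa*{X} = X^n + \sum_{i<n} a_i\pa*{x} X^i$. Expanding the Jacobian matrix of $F$ along its last row contributes a factor of $c^n$, and the remaining $n \times n$ block is the derivative of the coefficient map $x \mapsto \pa*{a_0\pa*{x}, \dots, a_{n-1}\pa*{x}}$; using the classical identity \eqref{eq:g:disc-conversion} together with $\pav{\fdisc[K/\QQ_p]} = 1$ (unramified), its absolute value equals $\pav{\xdisc{x}}^{1/2}$. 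The constraint $\rv \in \ZZ_p^{n+1}$ reads $\pav{c} \le 1/M\pa*{x}$ where $M\pa*{x} = \max\pa*{1, \pav{a_0\pa*{x}}, \dots, \pav{a_{n-1}\pa*{x}}}$, and the elementary computation $\int_{\pav{c} \le r} \pav{c}^n \diff c = \tfrac{(p-1)p^n}{p^{n+1}-1}\,r^{n+1}$ collapses the inner $c$-integral, producing
\begin{equation*}
  \Ex\br*{\newroots{U}} = \frac{(p-1) p^n}{p^{n+1} - 1} \int_U \frac{\pav{\xdisc{x}}^{1/2}}{M\pa*{x}^{n+1}} \diff\lambda_K\pa*{x}.
\end{equation*}

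For $U \subseteq \Ocl_K$ we have $M\pa*{x} = 1$, and the integrand reduces to $\pav{\xdisc{x}}^{1/2}$. For $U = K$ I split $K = \Ocl_K \sqcup \pa*{K \setminus \Ocl_K}$ and substitute $x = 1/y$ on the second piece: using the reciprocal-polynomial identities $\pav{\xdisc{1/y}} = \pav{\xdisc{y}} \pav[K]{y}^{-2(n-1)}$ and $M\pa*{1/y} = 1/\pav[K]{y}$ for $y \in \maxideal \setminus \set*{0}$, together with $\diff\lambda_K\pa*{x} = \pav[K]{y}^{-2}\diff\lambda_K\pa*{y}$, the integrand transforms exactly to $\pav{\xdisc{y}}^{1/2}$. (This is the coefficient-reversal symmetry $f_n\pa*{X} \leftrightarrow X^n f_n\pa*{1/X}$, which preserves the distribution of $\rv_0, \dots, \rv_n$.) Hence $\Ex\br*{\newroots{K \setminus \Ocl_K}} = \Ex\br*{\newroots{\maxideal}}$, so \eqref{eq:total-root-num} will follow by adding \eqref{eq:int-root-num} and \eqref{eq:nonumit-root-num}.

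What remains is the identification
\begin{equation*}
  \int_{\Ocl_K} \pav{\xdisc{x}}^{1/2} \diff\lambda_K\pa*{x} = n\,\dfunc*_n\pa*{p}, \qquad \int_{\maxideal} \pav{\xdisc{x}}^{1/2} \diff\lambda_K\pa*{x} = \frac{n}{p^n}\,\dfunc*_n\pa*{p^{-1}},
\end{equation*}
which I expect to be the main obstacle. The plan is to stratify $\Ocl_K$ by the degree $d \mid n$ of the residue $\bar x \in \FF_{p^n}$ over $\FF_p$: on the stratum where $\deg \bar x = d$, the element $x$ lies in $\Ocl_{K_d} + p\,\Ocl_K$ for the unique unramified sub-extension $K_d \subseteq K$ of degree $d$, and writing $x = y + p z$ with $y \in \Ocl_{K_d}$ and $z \in \Ocl_K$ factorizes $\xdisc{x}$ in a way that yields a recursion for $\int_{\Ocl_K} \pav{\xdisc{x}}^{1/2}\diff\lambda_K\pa*{x}$ in terms of the analogous integrals over the sub-extensions $\Ocl_{K_d}$; this recursion will be made to match exactly \eqref{eq:d-function-rec}. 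The inversion formula \autoref{d-function-inverse} then converts the $\Ocl_K$-evaluation into the $\maxideal$-evaluation, producing both identities simultaneously. The main technical work is the careful bookkeeping of discriminant valuations across these recursive strata.
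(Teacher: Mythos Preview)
Your proposal is correct and follows essentially the same route as the paper. The integral formula you obtain via the Jacobian of $(c,x)\mapsto c\,m_x$ is exactly the one the paper imports from \cite[Theorem~5.8]{caruso2021zeroes} (with $\pav{\xdisc{x}}^{1/2}=\phi_{K/\QQ_p}(x)$ by \autoref{phi:disc-formula} in the unramified case); your stratification by the residue degree of $\bar x$ and the resulting recursion is precisely the content of \autoref{phi:faithful-nonunit} and \autoref{phi-function-integral}; and your use of \autoref{d-function-inverse} for the $\maxideal$-integral and of the coefficient-reversal symmetry for $K\setminus\Ocl_K$ match the paper verbatim. One small wording correction: the recursion reduces to integrals over $\Ocl_K$ of the discriminant \emph{relative to the intermediate field} $K_d$ (equivalently, $\phi_{K/K_d}$), not to integrals over the smaller ring $\Ocl_{K_d}$---this is what makes the argument of $\dfunc_{n/d}$ become $u^d$ rather than something else.
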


Let $K / F$ be a finite extension of $p$-adic fields.
Define $\phi_{K/F}: \Ocl_K \to \RR^{+}$ to be the function
\begin{equation}
\label{eq:phi-function}
  \phi_{K/F}\pa*{x} = \lambda_K\pa*{\Ocl_F\br*{x}}.
\end{equation}
We prove the following properties on the function $\phi_{K/F}$.
\begin{lem}
\label{phi:disc-formula}
For all $x \in \Ocl_K$,
\begin{equation*}
  \phi_{K/F}\pa*{x}^2 = \pav[F]{\frac{\xdisc{x}}{\fdisc[K/F]}}.
\end{equation*}
\end{lem}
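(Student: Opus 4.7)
The plan is to realise $\Ocl_F\br*{x}$ as the image of $\Ocl_K$ under an $F$-linear endomorphism $L$ of $K$, and then to translate everything into a determinant computation through the change-of-basis formula \eqref{eq:g:disc-conversion}.

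First I would dispose of the degenerate case $K \ne F\br*{x}$. In this situation the powers $1, x, \dots, x^{n-1}$ are $F$-linearly dependent, so by definition $\xdisc{x} = 0$; and $\Ocl_F\br*{x}$ sits inside the proper closed subfield $F\br*{x} \subsetneq K$, which has $\lambda_K$-measure zero. Both sides of the asserted identity therefore vanish, and the claim is trivial.

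Assume now that $K = F\br*{x}$. Fix any $\Ocl_F$-basis $\omega_1, \dots, \omega_n$ of $\Ocl_K$ and let $L \colon K \to K$ be the unique $F$-linear map satisfying $L\pa*{\omega_i} = x^{i-1}$. Because $x \in \Ocl_K$ is integral over $\Ocl_F$, the powers $1, x, \dots, x^{n-1}$ generate $\Ocl_F\br*{x}$ as an $\Ocl_F$-module, so $L\pa*{\Ocl_K} = \Ocl_F\br*{x}$. Extending the one-dimensional scaling rule $\lambda_F\pa*{\alpha S} = \pav[F]{\alpha}\lambda_F\pa*{S}$ to the $F$-vector space $K$ (via Smith normal form over $\Ocl_F$, so that $L$ factors into elementary operations on the basis $\omega_1, \dots, \omega_n$) gives the modulus identity
\begin{equation*}
\phi_{K/F}\pa*{x} \;=\; \lambda_K\pa*{L\pa*{\Ocl_K}} \;=\; \pav[F]{\det L} \cdot \lambda_K\pa*{\Ocl_K} \;=\; \pav[F]{\det L}.
\end{equation*}
On the other hand, \eqref{eq:g:disc-conversion} applied to $L$ reads $\xdisc{x} = \pa*{\det L}^2 \xdisc{\omega_1, \dots, \omega_n}$, and since $\omega_1, \dots, \omega_n$ is an $\Ocl_F$-basis of $\Ocl_K$, the element $\xdisc{\omega_1, \dots, \omega_n}$ agrees with $\fdisc[K/F]$ up to a unit of $\Ocl_F$. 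Squaring the previous display and dividing finishes the proof.

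The only non-routine ingredient is the Haar-measure scaling $\lambda_K\pa*{L\pa*{S}} = \pav[F]{\det L}\,\lambda_K\pa*{S}$ for an $F$-linear automorphism $L$ of $K$; this is the modulus of $L$ acting on the locally compact group $K$, and it has to be deduced from the one-dimensional rule recorded in Section~2 by decomposing $L$ into elementary row and column operations over $\Ocl_F$ in the basis $\omega_1, \dots, \omega_n$. Once that is established, everything else is a direct substitution.
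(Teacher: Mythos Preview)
Your proof is correct and follows essentially the same route as the paper: handle the degenerate case $K\ne F\br*{x}$ by observing both sides vanish, then in the generic case take an $\Ocl_F$-basis $\omega_1,\dots,\omega_n$ of $\Ocl_K$, compare it to the power basis $1,x,\dots,x^{n-1}$ via a change-of-basis map, read off $\phi_{K/F}\pa*{x}=\pav[F]{\det}$ from the Haar-measure scaling, and combine with \eqref{eq:g:disc-conversion}. The paper states the identity $\phi_{K/F}\pa*{x}=\pav[F]{\det M}$ without comment, whereas you spell out both why $L\pa*{\Ocl_K}=\Ocl_F\br*{x}$ (integrality of $x$) and how the modulus formula follows from the one-dimensional scaling rule via Smith normal form; these are welcome clarifications but not a different argument.
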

\begin{proof}
  If $K \ne F\br*{x}$ then $\xdisc{x} = 0$ and also $\lambda_K\pa*{\Ocl_F\br*{x}} \le \lambda_K\pa*{F\br*{x}} = 0$.
  So we assume that $K = F\br*{x}$.

  Let $\Bcl = \pa*{\omega_1, \dots, \omega_n}$ is an $\Ocl_F$-basis of $\Ocl_K$ and let $\Bcl_x = \pa*{1, x, \dots, x^n}$ be a $F$-basis of $K$.
  Note that $\Bcl$ is also a $F$-basis of $K$.
  Set $M$ to be the change of basis matrix from $\Bcl_x$ to $\Bcl$, so
  \begin{equation}
    \label{eq:phi:df:det-formula}
    \phi_{K/F}\pa*{x} = \pav[F]{\det M}.
  \end{equation}

  Moreover, from \eqref{eq:g:disc-conversion}
  \begin{equation*}
    \xdisc{1,x,\dots, x^n} = \pa*{\det M}^2 \xdisc{\omega_1, \dots, \omega_n}.
  \end{equation*}
  Since $\xdisc{\omega_1, \dots, \omega_n}$ is also the discriminant $\fdisc[K/F]$, we obtain
  \begin{equation*}
    \pa*{\det M}^2 = \frac{\xdisc{x}}{\fdisc[K/F]}.
  \end{equation*}
  Applying absolute value $\pav[F]{\cdot}$ on both sides and then using \eqref{eq:phi:df:det-formula} finish the proof.
\end{proof}

\begin{lem}
\label{phi:multiplication}
Assume $K/F$ is of degree $n$.
If $x \in \Ocl_K$ and $\alpha \in \Ocl_F$ then
\begin{equation*}
  \phi_{K/F}\pa*{\alpha x} = \abs*{\alpha}_F^{\binom{n}{2}} \phi_{K/F}\pa*{x}.
\end{equation*}
\end{lem}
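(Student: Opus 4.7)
The plan is to reduce to the preceding discriminant formula (\autoref{phi:disc-formula}). The key observation is that the change of basis from $(1, x, \dots, x^{n-1})$ to $(1, \alpha x, (\alpha x)^2, \dots, (\alpha x)^{n-1})$ is given by the diagonal matrix $\mathrm{diag}(1, \alpha, \alpha^2, \dots, \alpha^{n-1})$, whose determinant is $\alpha^{0+1+\cdots+(n-1)} = \alpha^{\binom{n}{2}}$.

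First I would handle the degenerate case: if $K \ne F\br*{x}$ then also $K \ne F\br*{\alpha x}$ (and if $\alpha = 0$ and $n>1$ the identity holds trivially since both sides vanish), so we may assume $K = F\br*{x}$. Under this assumption, the discriminant transformation formula \eqref{eq:g:disc-conversion} applied to the diagonal change of basis above yields
\begin{equation*}
\xdisc{\alpha x} = \alpha^{2\binom{n}{2}} \xdisc{x}.
\end{equation*}

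Next, I would apply \autoref{phi:disc-formula} to both $\alpha x$ and $x$:
\begin{equation*}
\phi_{K/F}\pa*{\alpha x}^2 = \pav[F]{\frac{\xdisc{\alpha x}}{\fdisc[K/F]}} = \pav[F]{\alpha}^{2\binom{n}{2}} \pav[F]{\frac{\xdisc{x}}{\fdisc[K/F]}} = \pav[F]{\alpha}^{2\binom{n}{2}} \phi_{K/F}\pa*{x}^2.
\end{equation*}
Since $\phi_{K/F}$ and $\pav[F]{\cdot}$ are both non-negative, taking square roots gives the desired identity. There is no real obstacle here; the content of the lemma is entirely encoded in the determinant of a triangular change-of-basis matrix, so the only thing to watch is the degenerate case $K \ne F\br*{x}$ (which reduces both sides to zero consistently, using that $\binom{n}{2} > 0$ whenever $n \ge 2$).
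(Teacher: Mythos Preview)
Your proof is correct and follows essentially the same route as the paper: the paper's one-line proof invokes \autoref{phi:disc-formula} together with the identity $\xdisc{\alpha x} = \alpha^{n(n-1)}\xdisc{x}$, and your argument simply unpacks that identity via the diagonal change-of-basis matrix and \eqref{eq:g:disc-conversion}. Your explicit treatment of the degenerate case $K \ne F\br*{x}$ is a small addition that the paper leaves implicit.
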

\begin{proof}
Immediately from \autoref{phi:disc-formula} and the identity $\xdisc{\alpha x} = \alpha^{n\pa*{n-1}} \xdisc{x}$.
\end{proof}

We also need the following definition:
\begin{defn}
We say an integer element $\zeta \in \Ocl_K$ is \emph{inertial} if its degree over $\QQ_p$ equals to the degree of $\zeta \bmod \maxideal$ over $\FF_p$.
\end{defn}

Let $L$ be the unique subfield in $K$ which is unramified over $\QQ_p$ and of degree $m$.
For each element $\bar \zeta$ in the residue field of $K$ of degree $m$, we can lift $\bar\zeta$ to an element $\zeta$ in $L$.
It is clear that $\zeta$ is inertial.
Therefore, for each residue class of $\Ocl_K / \maxideal$ we can choose an inertial representative.

In addition, if $\zeta \in \Ocl_K$ is inertial and $q$ is the size of the residue field of $F$ then the degree of $\zeta$ over $F$ is the same as the degree of $\zeta \bmod \maxideal[F]$ over $\FF_q$.

\begin{lem}
\label{phi:faithful-nonunit}
Assume $K/F$ is an unramified extension.
Let $\zeta \in \Ocl_K$ be an inertial element of degree $m$ over $F$ and $x \in \maxideal[K]$.
Then
\begin{equation*}
  \phi_{K/F}\pa*{\zeta + x} = \phi_{K/F\br*{\zeta}}\pa*{x}.
\end{equation*}
\end{lem}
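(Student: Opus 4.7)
Set $E = F\br*{\zeta}$; since $K/F$ is unramified and $\zeta$ is inertial of degree $m$, the intermediate field $E$ is unramified of degree $m$ over $F$ and $K/E$ is unramified of degree $n/m$.  The plan is to pass to element-discriminants via \autoref{phi:disc-formula}: since $K/F$ and $K/E$ are unramified, the field discriminants $\fdisc[K/F]$ and $\fdisc[K/E]$ are units, so the desired equation reduces to
\begin{equation*}
  \pav[F]{\xdisc[K/F]{\zeta + x}} = \pav[E]{\xdisc[K/E]{x}}.
\end{equation*}
If $x$ fails to generate $K$ over $E$ then $\zeta + x \in E\br*{x} \subsetneq K$, both sides vanish, and there is nothing to prove; so I may assume $K = E\br*{x}$, whence $K = F\br*{\zeta + x}$ as well.

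Next I will expand $\xdisc[K/F]{\zeta+x}$ as (the square of) the Vandermonde product $\prod_{i<j}\pa*{\sigma_i\pa*{\zeta+x} - \sigma_j\pa*{\zeta+x}}$ over the $n$ embeddings $\sigma_1, \dots, \sigma_n \in \Hom_F\pa*{K, \bar F}$, and split the pairs according to whether they agree on $E$.  The restriction map $\Hom_F\pa*{K, \bar F} \to \Hom_F\pa*{E, \bar F}$ is surjective with fibers of size $n/m$.  For a pair in a common fiber, $\sigma_i\pa*{\zeta} = \sigma_j\pa*{\zeta}$, so the corresponding factor simplifies to $\sigma_i\pa*{x} - \sigma_j\pa*{x}$.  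Fiber by fiber these reassemble Galois conjugates of $\xdisc[K/E]{x}$, and the product over all $m$ fibers yields $N_{E/F}\pa*{\xdisc[K/E]{x}}$, whose $F$-absolute value equals $\pav[E]{\xdisc[K/E]{x}}$ by the formula $\pav[F]{N_{E/F}\pa*{a}} = \pav[E]{a}$.

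The main obstacle is to show that the factors coming from pairs $\sigma_i, \sigma_j$ with $\sigma_i|_E \ne \sigma_j|_E$ contribute $F$-absolute value exactly $1$.  This is where inertiality of $\zeta$ is essential: distinct conjugates of $\zeta$ reduce to distinct conjugates of $\bar\zeta$ in the residue field of $\bar F$, because $\bar\zeta$ has degree $m$ over the residue field of $F$; hence $\pav[F]{\sigma_i\pa*{\zeta} - \sigma_j\pa*{\zeta}} = 1$ whenever $\sigma_i|_E \ne \sigma_j|_E$.  Since $x \in \maxideal[K]$ gives $\pav[F]{\sigma_i\pa*{x}}, \pav[F]{\sigma_j\pa*{x}} < 1$, the ultrametric inequality forces $\pav[F]{\sigma_i\pa*{\zeta+x} - \sigma_j\pa*{\zeta+x}} = 1$.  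Combined with the fiber-wise analysis this yields the claimed equality of absolute values and finishes the proof.
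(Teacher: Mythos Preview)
Your proof is correct and follows essentially the same approach as the paper: reduce via \autoref{phi:disc-formula} to comparing element discriminants, then split the Galois conjugates of $\zeta+x$ according to their restriction to $E=F[\zeta]$ and use inertiality of $\zeta$ together with $x\in\maxideal[K]$ to show the cross-fiber factors are units. The only difference is packaging---the paper routes the same computation through the factorization $f=gh$ of the minimal polynomials of $y=\zeta+x$ over $F$ and over $E$, reducing to $\pav[K]{h(y)}=1$ via the identity $\xdisc{y}=N_{K/F}\!\pa*{f'(y)}$, whereas you work directly with the Vandermonde product and the norm formula $\pav[F]{N_{E/F}(a)}=\pav[E]{a}$.
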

\begin{proof}
Set $y = \zeta + x$ and $L = F\br*{\zeta}$.
We have that $\pav[F]{\fdisc[K/F]} = \pav[L]{\fdisc[K/L]} = 1$.
So by \autoref{phi:disc-formula} it is sufficed to show that $\pav[F]{\xdisc{y}} = \pav[L]{\xdisc[K/L]{y}}$.

Let $f$ (resp. $g$) be the minimal polynomial of $y$ over $F$ (resp. $L$).
We have that
\begin{equation}
  \label{eq:phi-trans:disc-f}
  \pav[F]{\xdisc{y}}
    = \pav[F]{\anorm[K/F]{f'\pa*{y}}}
    = \pav[K]{f'\pa*{y}},
\end{equation}
and similarly
\begin{equation}
  \label{eq:phi-trans:disc-g}
  \pav[L]{\xdisc[K/L]{y}}
    = \pav[L]{\anorm[K/L]{g'\pa*{y}}}
    = \pav[K]{g'\pa*{y}}.
\end{equation}

Moreover, $g \mid f$ so there exists $h \in L\br*{X}$ such that $f = g h$.
Since $f' = g' h + g h'$ and $g\pa*{y} = 0$ we get that $f'\pa*{y} = g'\pa*{y} h\pa*{y}$.
Hence, form \eqref{eq:phi-trans:disc-f} and \eqref{eq:phi-trans:disc-g} we get that
\begin{equation}
  \label{eq:phi-trans:disc-split}
  \pav[F]{\xdisc{y}} = \pav[L]{\xdisc[K/L]{y}} \pav[K]{h\pa*{y}}.
\end{equation}

From the hypothesis $K/F$ is unramified, hence $K/F$ is Galois extension.
We denote $G = \Gal\pa*{K / F}$ and $H = \Gal\pa*{K / L} \le G$.
Since
\begin{equation*}
f\pa*{X} = \prod_{\sigma \in G} \pa*{X - \sigma\pa*{y}}, \quad \text{and} \quad
g\pa*{X} = \prod_{\sigma \in H} \pa*{X - \sigma\pa*{y}},
\end{equation*}
we have
\begin{equation*}
h\pa*{X} = \prod_{\substack{\sigma \in G \\ \sigma \notin H}} \pa*{X - \sigma\pa*{y}}.
\end{equation*}

Let $\pi$ be the common uniformizer of $F,L$ and $K$.
So
\begin{equation}
\label{eq:phi-trans:h-poly}
h\pa*{y} \equiv \prod_{\substack{\sigma \in G \\ \sigma \notin H}} \pa*{\zeta - \sigma\pa*{\zeta}} \pmod{\pi}.
\end{equation}
There is no $\sigma \in G \setminus H$ such that $\zeta \equiv \sigma\pa*{\zeta} \pmod{\pi}$.
Indeed, if $\zeta \equiv \sigma\pa*{\zeta} \pmod{\pi}$ then $\Ocl_L = \Ocl_F\br*{\zeta}$ implies that $\sigma|_L$ is in the inertia group of $L/F$.
But since $L/F$ is unramified, we get that $\sigma|_L = \id_L$ which contradicts $\sigma \notin H$.

Therefore, we get from \eqref{eq:phi-trans:h-poly} that $h\pa*{y} \not\equiv 0 \pmod{\pi}$ and consequently $\pav[K]{h\pa*{y}} = 1$.
Plugging this into \eqref{eq:phi-trans:disc-split} finish the proof.
\end{proof}

\begin{lem}
\label{phi-function-integral}
Let $K/F$ be an unramified extension of degree $n$ and let $r \in \CC$ with $\Im \pa*{r} > 0$.
Assume $q$ is the size of residue field of $F$.
Then
\begin{equation*}
\int_{\Ocl_K} \phi_{K/F}\pa*{x}^r \diff x = \dfunc_n\pa*{q, q^{-nr/2}}.
\end{equation*}
\end{lem}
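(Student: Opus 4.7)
The plan is induction on $n$, matching the recursion \eqref{eq:d-function-rec}. For the base case $n=1$, $K=F$, so $\Ocl_F\br*{x}=\Ocl_F$ for every $x\in\Ocl_F$ and $\phi_{K/F}\equiv 1$, giving $\int_{\Ocl_F}1\,dx = 1 = \dfunc_1\pa*{q,q^{-r/2}}$.

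For the inductive step, I partition $\Ocl_K$ into the $q^n$ cosets $\zeta+\maxideal[K]$ as $\zeta$ ranges over inertial lifts of residues in $\FF_{q^n}\cong\Ocl_K/\maxideal[K]$, and group them by $m := \br*{F\pa*{\zeta}:F}$. Since $K/F$ is unramified, $m$ divides $n$, and the number of residues whose inertial lift has degree $m$ is $N_m := \sum_{e\mid m}\mu\pa*{m/e}q^e$. On each coset, \autoref{phi:faithful-nonunit} gives $\phi_{K/F}\pa*{\zeta+y}=\phi_{K/F\pa*{\zeta}}\pa*{y}$ for $y\in\maxideal[K]$. Substituting $y=\pi z$ for a common uniformizer $\pi$ contributes a Jacobian $\pav[K]{\pi}=q^{-n}$, and \autoref{phi:multiplication} applied to $K/F\pa*{\zeta}$ (of degree $n/m$ over a residue field of size $q^m$) yields $\phi_{K/F\pa*{\zeta}}\pa*{\pi z}=q^{-m\binom{n/m}{2}}\phi_{K/F\pa*{\zeta}}\pa*{z}$. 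Thus the coset integral equals $q^{-n-mr\binom{n/m}{2}}\int_{\Ocl_K}\phi_{K/F\pa*{\zeta}}\pa*{z}^r\,dz$; for $m>1$ the induction hypothesis evaluates this last integral as $\dfunc_{n/m}\pa*{q^m,q^{-nr/2}}$, while for $m=1$ we have $F\pa*{\zeta}=F$ and the last integral is the desired quantity $I$ itself.

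Collecting and isolating $I$ gives the linear equation
\begin{equation*}
I\,\bigl(1-q^{1-n-r\binom{n}{2}}\bigr)=\sum_{\substack{m\mid n\\ m>1}}N_m\,q^{-n-mr\binom{n/m}{2}}\,\dfunc_{n/m}\pa*{q^m,q^{-nr/2}}.
\end{equation*}
Setting $u=q$ and $v=q^{-nr/2}$, one checks $q^{n-1}\bigl(1-q^{1-n-r\binom{n}{2}}\bigr)=u^{n-1}-v^{n-1}$; the identity $mr\binom{n/m}{2}=nr\pa*{n-m}/\pa*{2m}$ gives $q^{-mr\binom{n/m}{2}}=v^{n/m-1}$; and directly $q^{-1}N_m=\sum_{e\mid m}\mu\pa*{m/e}u^{e-1}$. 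Substituting these turns the equation into \eqref{eq:d-function-rec} term-by-term with summation index $d=m$, so $I=\dfunc_n\pa*{q,q^{-nr/2}}$.

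The main obstacle is careful bookkeeping of the powers of $q$ through the change of variables and the matching with the recursion's coefficients. The hypothesis $\Im\pa*{r}>0$ is used to guarantee that $1-q^{1-n-r\binom{n}{2}}$ does not vanish (so $I$ is uniquely determined by the linear equation) and to ensure absolute convergence of the integrals in play.
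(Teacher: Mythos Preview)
Your proposal is correct and follows essentially the same approach as the paper: induction on $n$, partitioning $\Ocl_K$ into cosets by inertial representatives grouped by degree, applying \autoref{phi:faithful-nonunit} and \autoref{phi:multiplication}, and then matching the resulting linear equation in the integral with the recursion \eqref{eq:d-function-rec}. The notation differs only cosmetically (your $I$, $m$, $N_m$ are the paper's $J$, $d$, $\gen_d(q)$), and your explicit identification of the powers of $q$ with the $u$- and $v$-exponents in \eqref{eq:d-function-rec} is if anything slightly more detailed than the paper's treatment.
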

\begin{proof}
We prove this with induction on $n$.
For $n = 1$ we have that $K = F$ and $\phi_{F/F}\pa*{x} = 1$.
So it is clear that $\int_{\Ocl_F} \phi_{F/F}\pa*{x}^r \diff x = \dfunc_1\pa*{q, q^{-nr/2}}$.

Assume $n > 1$.
Let $\pi$ be the common uniformizer of $F$ and $K$, and let $\Scl$ be a set of inertial representatives of the residue classes of $\Ocl_K /\maxideal$.

We have that $\Ocl_K = \bigsqcup_{\zeta \in \Scl} \pa*{\zeta + \maxideal[K]}$.
Hence,
\begin{equation*}
J := \int_{\Ocl_K} \phi_{K/F}\pa*{x}^r \diff x = \sum_{\zeta \in \Scl} \int_{\zeta + \maxideal[K]} \phi_{K/F}\pa*{x}^r \diff x
\end{equation*}
We apply change of variables $x \mapsto \zeta + \pi x$ to obtain
\begin{align*}
J
  &= \sum_{\zeta \in \Scl} \int_{\Ocl_K} \phi_{K/F}\pa*{\zeta + \pi x}^r \abs*{\pi}_K \diff x \\
  &= q^{-n} \sum_{\zeta \in \Scl} \int_{\Ocl_K} \phi_{K/F}\pa*{\zeta + \pi x}^r \diff x.
\end{align*}
Since $\zeta$ is inertial for each $\zeta \in \Scl$ we can use \autoref{phi:faithful-nonunit} to infer that
\begin{equation*}
J
  = q^{-n} \sum_{\zeta \in \Scl} \int_{\Ocl_K} \phi_{K/F\br*{\zeta}}\pa*{\pi x}^r \diff x.
\end{equation*}
Also, by \autoref{phi:multiplication} we get that
\begin{align*}
J
  &= q^{-n} \sum_{1\ne d \mid n} \; \sum_{\substack{\zeta \in \Scl \\ \deg \zeta = d}} \int_{\Ocl_K} \phi_{K/F\br*{\zeta}}\pa*{\pi x}^r \diff x \\
  &= q^{-n} \sum_{d \mid n} \; \sum_{\substack{\zeta \in \Scl \\ \deg_F \zeta = d}} \int_{\Ocl_K} \abs*{\pi}_{F\br*{\zeta}}^{\binom{n/d}{2}r} \phi_{K/F\br*{\zeta}}\pa*{x}^r \diff x \\
  &= q^{-n} \sum_{d \mid n} q^{-\binom{n /d}{2} d r} \sum_{\substack{\zeta \in \Scl \\ \deg_F \zeta = d}} \int_{\Ocl_K} \phi_{K/F\br*{\zeta}}\pa*{x}^r \diff x.
\end{align*}
Therefore, by the induction assumption we have that
\begin{equation*}
J
  = q^{-n} \sum_{1 \ne d \mid n} q^{-\binom{n /d}{2} d r} \sum_{\substack{\zeta \in \Scl \\ \deg_F \zeta = d}} \dfunc_{n/d}\pa*{q^d, q^{- nr/2}}
  + q^{-\binom{n}{2} r -n } \sum_{\substack{\zeta \in \Scl \\ \deg_F \zeta = 1}} J.
\end{equation*}
We set $\gen_d\pa*{q} = \#\set*{\zeta \in \Scl : d = \deg \zeta}$ and then
\begin{equation*}
J = q^{-n} \sum_{1 \ne d \mid n} q^{-\binom{n /d}{2} d r} \gen_d\pa*{q} \dfunc_{n/d}\pa*{q^d, q^{- nr/2}}
  + q^{-\binom{n}{2} r -n + 1} J.
\end{equation*}
By simple calculations we get that
\begin{equation}
\label{eq:pfi:main}
J = \frac{q^{- 1}}{q^{ n - 1} - q^{-\binom{n}{2}r}}\sum_{\substack{1\ne d \mid n}} q^{-\binom{n/d}{2}dr} \gen_d\pa*{q} \dfunc_{n/d}\pa*{q^d, q^{- nr/2}}.
\end{equation}

Next, we have that
\begin{equation}
\label{eq:pfi:num-ele-deg}
\gen_d\pa*{q}
  = q \sum_{e\mid d} \mu\pa*{\frac{d}{e}}q^{e - 1}.
\end{equation}
Indeed, since the elements of $\Scl$ are inertial
\begin{equation*}
\gen_d\pa*{q}
  =\#\set*{\zeta \in \Scl : d = \deg \zeta}
  = \#\set*{\bar \zeta \in \FF_{q^d} : d = \deg_{\FF_{q}} \bar \zeta}.
\end{equation*}
And since
\begin{equation*}
q^d = \sum_{e \mid d} \#\set*{\bar \zeta \in \FF_{q^d} : e = \deg_{\FF_q} \bar \zeta}
  = \sum_{e \mid d} \gen_e\pa*{q},
\end{equation*}
From M\"obius inversion formula we obtain
\begin{equation*}
\gen_d\pa*{q}
  = \sum_{e\mid d} \mu\pa*{\frac{d}{e}} q^e,
\end{equation*}
and \eqref{eq:pfi:num-ele-deg} follows immediately.

Finally, we plug in \eqref{eq:pfi:num-ele-deg} into \eqref{eq:pfi:main} gives
\begin{equation*}
J =
\frac{1}{q^{ n - 1} - q^{-\binom{n}{2}r}}\sum_{\substack{1\ne d \mid n}} q^{-\binom{n/d}{2}dr} \pa*{\sum_{e \mid d}\mu\pa*{\frac{d}{e}}q^{e-1}}\dfunc_{n/d}\pa*{q^d, q^{- nr/2}}.
\end{equation*}
We finish the proof by setting $u= q$ and $v=q^{-nr/2}$ in \eqref{eq:d-function-rec} and then comparing to the last equation.
\end{proof}

\begin{rem}
  \autoref{phi-function-integral} is related to Igusa's local zeta functions (see \cite{denef1991report} or \cite{igusa2007introduction}).
  This relation appear in the following manner:

  Let $\omega_1, \dots, \omega_n$ be a $\Ocl_F$-basis of $\Ocl_K$ we define the multivariate polynomial
  \begin{equation*}
    h\pa*{X_1, \dots, X_n} = \xdisc[K]{X_1 \omega_1 + \dots + X_n \omega_n}.
  \end{equation*}
  The polynomial $h$ is a homogeneous polynomial with coefficients in $F$.
  Set $s = 2r$, so the Igusa's zeta function of $h$ is
  \begin{equation*}
    Z_h\pa*{s} = \int_{\Ocl_F^n} \pav[F]{h\pa*{\underline{x}}}^s \diff \underline{x} = \int_{\Ocl_K} \phi_{K/F}\pa*{x}^r \diff x.
  \end{equation*}
  It is known that $Z_h$ is a rational function of $q^{-1}$ and $q^{-s}$,
  \autoref{phi-function-integral} gives this rational function explicitly.
  Moreover, by setting $Z\pa*{u, v} = \dfunc_n\pa*{u^{-1}, v^n}$ and using \autoref{d-function-inverse} we obtain $Z\pa*{u^{-1}, v^{-1}} = v^{\binom{n}{2}} Z\pa*{u, v}$.
  This functional equation has been conjectured for by Igusa \cite{igusa1989universal} under certain conditions on the polynomial $h$.
  One of the conditions is the existence of resolutions of singularities.
  Later, this conjecture was proved by Denef and Meuser \cite{denef1991functional}.
\end{rem}

\begin{proof}[Proof of \autoref{hn-roots}]
Let $U \subseteq \Ocl_K$ open set, so from \cite[Theorem 5.8]{caruso2021zeroes} we have
\begin{equation*}
  \Ex\br*{\newroots{U}} = \pav{\fdisc[K/\QQ_p]} \cdot \frac{p^{n+1} - p^n}{p^{n+1} - 1} \int_{U} \phi_{K / \QQ_p} \pa*{x} \diff x.
\end{equation*}
Since $K / \QQ_p$ is unramified,
\begin{equation}
  \label{eq:hr:ex-roots}
  \Ex\br*{\newroots{U}} = \frac{p^{n+1} - p^n}{p^{n+1} - 1} \int_{U} \phi_{K / \QQ_p} \pa*{x} \diff x.
\end{equation}

Therefore, \eqref{eq:int-root-num} follows immediately from \eqref{eq:d-star-function}, \eqref{eq:hr:ex-roots} and \autoref{phi-function-integral}.
For \eqref{eq:nonumit-root-num} we set $U = \maxideal$ in  \eqref{eq:hr:ex-roots} and get
\begin{equation}
  \label{eq:hr:ex-roots-ideal}
  \Ex\br*{\newroots{\maxideal}} = \frac{p^{n+1} - p^n}{p^{n+1} - 1} \int_{\maxideal} \phi_{K / \QQ_p} \pa*{x} \diff x.
\end{equation}
By a change of variable we get that
\begin{align*}
  \int_{\maxideal} \phi_{K / \QQ_p} \pa*{x}  \diff x
    &= \int_{\Ocl_K} \phi_{K / \QQ_p} \pa*{px} \cdot \pav[K]{p} \diff x \\
    &= p^{-n} \int_{\Ocl_k}\phi_{K / \QQ_p} \pa*{px} \diff x.
\end{align*}
Next we apply \autoref{phi:multiplication}, then \autoref{phi-function-integral} and then \autoref{d-function-inverse} we obtain
\begin{align*}
  \int_{\maxideal} \phi_{K / \QQ_p} \pa*{x} \diff x
    &= p^{-\binom{n}{2} - n} \int_{\Ocl_K} \phi_{K / \QQ_p} \pa*{x} \diff x \\
    &= p^{-\binom{n}{2} - n} \dfunc_n\pa*{p, p^{-n/2}} \\
    &= p^{-n} \dfunc_n\pa*{p^{-1}, p^{n/2}}.
\end{align*}
And we get \eqref{eq:nonumit-root-num} by plugging the last equation and \eqref{eq:d-star-function} into \eqref{eq:hr:ex-roots-ideal}.

Finally, we prove \eqref{eq:total-root-num}.
We have that
\begin{equation}
  \label{eq:hr:spliting-total-roots}
  \Ex\br*{\newroots*} = \Ex\br*{\newroots{\Ocl_K}} + \Ex\br*{\newroots{K\setminus \Ocl_K}}.
\end{equation}
If $x \in K\setminus \Ocl_k$ is a root of $f_n$, then $x^{-1}$ is a root of $X^n f_n\pa*{X^{-1}}$ and $x^{-1} \in \maxideal$.
Moreover, $X^n f_n\pa*{X^{-1}}$ has the same law as $f_n$.
Thus, $\Ex\br*{\newroots{K\setminus \Ocl_K}} = \Ex\br*{\newroots{\maxideal}}$.
Setting this in \eqref{eq:hr:spliting-total-roots} and then plugging \eqref{eq:int-root-num} and \eqref{eq:nonumit-root-num} finish the proof.
\end{proof}

\section{Main Proof}
\label{sec:main}
Set $\sigma = \pa*{n^1}$ as in \autoref{main-thm} and let $K / \QQ_p$ be an \'etale extension of splitting type $\sigma$.
Then $K$ must be a field of degree $n$ and unramified over $\QQ_p$.
Moreover, all the \'etale extensions of $\QQ_p$ with splitting type $\sigma$ are isomorphic to $K$.

We begin with proving the following relation between $\newroots*$ and the probabilities $\rho, \alpha$ and $\beta$. (c.f. \cite[Proposition 4.11]{caruso2021zeroes}).

\begin{lem}
  \label{probs-to-roots}
  Let $C$ be an event such that $\Pr\pa*{ \cdot \cond C}$ is well-defined.
  Then
  \begin{equation*}
    \Pr\pa*{E_\sigma \cond C} = \frac{1}{n} \Ex\br*{\newroots* \cond C}.
  \end{equation*}
\end{lem}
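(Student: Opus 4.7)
The plan is to show the pointwise identity $\inidicator_{E_\sigma} = \frac{1}{n} \newroots*$ (almost surely), from which the claim follows immediately by taking conditional expectation with respect to $C$.

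First I would unpack what $E_\sigma$ means. Since $\sigma = (n^1)$ and any \'etale algebra over $\QQ_p$ with this splitting type is isomorphic to the unramified field $K$ of degree $n$, the event $E_\sigma$ is the event that $A_n = \QQ_p[X]/\langle f_n\rangle$ is étale and isomorphic (as a $\QQ_p$-algebra) to $K$. In particular, on $E_\sigma$ the polynomial $f_n$ has degree exactly $n$ (i.e.\ $\rv_n \neq 0$), and $f_n = c \cdot g$ where $c \in \QQ_p^\times$ and $g$ is the irreducible minimal polynomial of some generator of $K$ over $\QQ_p$.

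Next I would verify the two directions of $\inidicator_{E_\sigma} = \frac{1}{n}\newroots*$. For the forward direction, on $E_\sigma$ the polynomial $f_n$ is a scalar multiple of an irreducible degree-$n$ polynomial splitting completely in $K$, so all $n$ of its roots lie in $K$, each root generates $K$ over $\QQ_p$, and hence $\newroots* = n$. For the reverse direction, suppose $\newroots* \geq 1$, so there exists $x \in K$ with $f_n(x) = 0$ and $\QQ_p[x] = K$. Then the minimal polynomial $m_x$ of $x$ over $\QQ_p$ has degree $n$ and divides $f_n$; since $\deg f_n \leq n$, this forces $f_n = c \cdot m_x$ for some $c \in \QQ_p^\times$. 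Consequently $A_n \cong \QQ_p[X]/\langle m_x\rangle \cong K$, so $E_\sigma$ holds. This also shows that off $E_\sigma$ we have $\newroots* = 0$.

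Combining, almost surely $\newroots* = n \cdot \inidicator_{E_\sigma}$. Taking $\Ex[\,\cdot \mid C]$ and dividing by $n$ yields $\frac{1}{n}\Ex[\newroots* \mid C] = \Pr(E_\sigma \mid C)$, as required. I do not anticipate a genuine obstacle here; the only subtlety to check carefully is that the almost-sure set on which $A_n$ is étale is the same as the set where the pointwise argument above is valid, and that the pathological set $\{\rv_n = 0\}$ (which has probability zero) can safely be ignored.
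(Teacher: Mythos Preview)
Your proof is correct and follows essentially the same strategy as the paper: establish the pointwise identity $\newroots* = n\cdot\inidicator_{E_\sigma}$ (almost surely) and then take conditional expectation. The only difference is packaging---the paper phrases the pointwise identity via $\newroots* = \#\Hom_{\QQ_p}^{\mathrm{surj}}(A_n,K)$ together with $\#\Aut_{\QQ_p}(K)=n$, whereas you argue directly with minimal polynomials; both routes amount to the same observation.
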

\begin{proof}
  First, we prove
  \begin{equation}
    \label{eq:ptr:caruso}
    \Pr\pa*{A_n \cong K \cond C} = \frac{\Ex\br*{\newroots* \cond C}}{\#\Aut_{\QQ_p}\pa*{K}}.
  \end{equation}
  This is similar to \cite[Proposition 5.11]{caruso2021zeroes}, and we follow its proof with small adjustments.

  We have that $\newroots* = \#\Hom_{\QQ_p}^{\text{surj}}\pa*{A_n, K}$.
  Also, since $A_n$ and $K$ are both of degree $n$ over $\QQ_p$ then any surjective homomorphism is an isomorphism.
  Therefore,
  \begin{equation*}
  \frac{\newroots*}{\#\Aut_{\QQ_p}\pa*{K}} = \inidicator_{A_n \cong K}.
  \end{equation*}
  By applying the expectation function $\Ex\br*{\ \cdot \cond C}$ on both sides of the equation and using the linearity of expectation we obtain \eqref{eq:ptr:caruso}.

  The field $K$ is the unique (up to an isomorphism) \'etale algebra extending $\QQ_p$ with splitting type $\sigma$.
  Therefore, the event $E_\sigma$ is equivalent to the event that $A_n \cong K$.
  Moreover, it is known that $\#\Aut_{\QQ_p}\pa*{K} = n$.
  Putting those facts in \eqref{eq:ptr:caruso} finish the proof.
\end{proof}

We now prove each of the equations in \autoref{main-thm}.

\begin{proof}[Proof of \eqref{eq:rho-formula}]
  This is an immediate consequence of \eqref{eq:total-root-num} and \autoref{probs-to-roots}.
\end{proof}

\begin{proof}[Proof of \eqref{eq:alpha-formula}]
  If $f_n$ is not a primitive polynomial we can divide it with $p$ and get a random polynomial with the same law as $f_n$.
  Therefore,
  \begin{equation}
  \label{eq:mr:primitive}
  \Ex\br*{\newroots{\Ocl_K}} = \Ex\br*{\newroots{\Ocl_K} \cond f_n \text{ primitive}}.
  \end{equation}

  Assume $f_n$ is primitive and $p \mid \rv_n$ occurs then $\bar f_n := f_n \bmod p$ is nonzero polynomial with degree $< n$.
  From Hensel's lemma (see \cite[Lemma II.4.6]{neukirch1999algebraic}) there exists $f, g \in \ZZ_p\br*{X}$ such that $\deg f = \deg \bar f_n < n$, $f \equiv f_n \pmod p$, $g \equiv 1 \pmod p$ and $f_n = fg$.

  Let $x \in \Ocl_K$ be a root of $f_n$, so $f\pa*{x} = 0$ otherwise $g \pa*{x} = 0$ which contradicts $g \equiv 1 \pmod p$.
  But since $\deg f < n$ we get that $K \ne \QQ_p\br*{x}$.
  Therefore, there are no roots of $f_n$  in $\Ocl_K$ which are generators of $K$ i.e. $\newroots{\Ocl_K} = 0$.

  From the assumption we have that $\Ex\br*{\newroots{\Ocl_K} \cond f_n \text{ primitive} \land p\mid \rv_n} = 0$.
  So, using the law of total expectation on the right side of \eqref{eq:mr:primitive} gives
  \begin{align*}
  \Ex\br*{\newroots{\Ocl_K}} &= \Pr\pa*{p\nmid \rv_n \cond f_n \text{ primitive}} \, \Ex\br*{\newroots{\Ocl_K} \cond p\nmid \rv_n} \\
    &= \frac{p^n\pa*{p - 1}}{p^{n+1} - 1} \cdot \Ex\br*{\newroots{\Ocl_K} \cond p\nmid \rv_n}.
  \end{align*}
  Dividing the variable $\rv_i$ with a unit does not change its law, so we can replace the condition in the expectation with $\rv_n = 1$ i.e.
  \begin{equation*}
  \Ex\br*{\newroots{\Ocl_K}} = \frac{p^n\pa*{p - 1}}{p^{n+1} - 1} \cdot \Ex\br*{\newroots{\Ocl_K} \cond f_n \text{ monic}}.
  \end{equation*}
  Also, under this condition all the roots of $f_n$ are in $\Ocl_K$ and thus $\newroots{\Ocl_K} = \newroots*$.
  So
  \begin{equation*}
  \Ex\br*{\newroots{\Ocl_K}} = \frac{p^{n+1} - p^n}{p^{n+1} - 1} \cdot \Ex\br*{\newroots* \cond f_n \text{ monic}},
  \end{equation*}
  Plugging \eqref{eq:int-root-num} into the last equation gives.
  \begin{equation*}
    \frac{1}{n} \Ex\br*{\newroots* \cond f_n \text{ monic}} = \dfunc*_n\pa*{p},
  \end{equation*}
  and the proof is finished by \autoref{probs-to-roots}.
\end{proof}

\begin{proof}[Proof of \eqref{eq:beta-formula}]
  Using similar arguments as in the proof of \eqref{eq:alpha-formula} we have that
  \begin{equation}
  \label{eq:mir:primitive}
  \Ex\br*{\newroots{\maxideal}} = \Ex\br*{\newroots{\maxideal} \cond f_n \text{ primitive}}.
  \end{equation}

  Assume $f_n$ is primitive and $f_n \not\equiv u X^n \pmod{p}$ for each unit $u \in \FF_p^\times$.
  So there exists $m < n$ and $\bar g \in \FF_p\br*{X}$ such that $f_n \equiv X^m \bar g \pmod p$ and $\bar g \pa*{0} \not\equiv 0 \pmod p$.
  By Hensel's lemma (see \cite[Lemma II.4.6]{neukirch1999algebraic}) there exists $f, g \in \ZZ_p\br*{X}$ such that $\deg f = m < n$, $f \equiv X^m \pmod p$, $g \equiv \bar g \pmod p$ and $f_n = f g$.

  Let $x \in \maxideal$ be a root of $f_n$, so $f\pa*{x} = 0$ otherwise $\bar g \pa*{0} \equiv g \pa*{x} = 0 \pmod p$ which is contradiction for the choice of $\bar g$.
  But since $\deg f < n$ we get that $K \ne \QQ_p\br*{x}$.
  Therefore, there are no roots of $f_n$ in $\maxideal$ which are generators of $K$ i.e. $\newroots{\maxideal} = 0$.

  From the assumption we have that $\Ex\br*{\newroots{\Ocl_K} \cond \bigwedge_{u\in \FF_p^\times} f_n \not\equiv u X^n \pmod p} = 0$.
  So, by applying the law of total expectation on the right of \eqref{eq:mir:primitive},
  \begin{multline*}
  \Ex\br*{\newroots{\maxideal}} \\
  \begin{aligned}
    &= \Pr\pa*{\bigvee_{u\in \FF_p^\times} f_n \equiv u X^n \pmod p \cond  f_n \text{ primitive}} \Ex\br*{\newroots{\maxideal} \cond \bigvee_{u\in \FF_p^\times} f_n \equiv u X^n \pmod p} \\
    &= \frac{p-1}{p^{n+1} - 1} \cdot \Ex\br*{\newroots{\maxideal} \cond \bigvee_{u\in \FF_p^\times} f_n \equiv u X^n \pmod p}.
  \end{aligned}
  \end{multline*}
  Dividing the variable $\rv_i$ with a unit does not change its law, so we can replace the condition in the expectation with the condition that $f_n$ monic and $f_n \equiv X^n \pmod p$ i.e.
  \begin{equation*}
  \Ex\br*{\newroots{\maxideal}} = \frac{p-1}{p^{n+1} - 1} \cdot \Ex\br*{\newroots{\maxideal} \cond f_n \text{ monic and } f_n \equiv X^n \pmod{p}}.
  \end{equation*}
  Also, under this condition all roots of $f_n$ are in $\maxideal$ and thus $\newroots{\maxideal} = \newroots*$.
  Thus,
  \begin{equation*}
  \Ex\br*{\newroots{\maxideal}} = \frac{p-1}{p^{n+1} - 1} \cdot \Ex\br*{\newroots* \cond f_n \text{ monic and } f_n \equiv X^n \pmod{p}}.
  \end{equation*}
  From \eqref{eq:nonumit-root-num} we get that
  \begin{equation*}
    \frac{1}{n} \Ex\br*{\newroots* \cond f_n \text{ monic and } f_n \equiv X^n \pmod{p}} = \dfunc*_n\pa*{p^{-1}},
  \end{equation*}
  and the proof is finished by \autoref{probs-to-roots}.
\end{proof}

\newpage
\bibliographystyle{alpha}
\bibliography{main}

\end{document}